\newcommand*\diff{\mathop{}\!\mathrm{d}}
\numberwithin{equation}{section}
\numberwithin{table}{section}
\numberwithin{figure}{section}
\numberwithin{theorem}{section}
\numberwithin{lemma}{section}
\crefname{lemma}{lemma}{lemmas}
\newcommand{\reel}[1]{\mathbb{R}}
\begin{document}

\title{Multilevel Monte Carlo learning}

\author{\name Thomas Gerstner \email gerstner@math.uni-frankfurt.de \\
		\name Bastian Harrach \email harrach@math.uni-frankfurt.de \\
		\name Daniel Roth \email roth@math.uni-frankfurt.de \\
       \addr Department of Mathematics\\
       Goethe University Frankfurt\\
       60325 Frankfurt am Main, Germany
       \AND
       \name Martin Simon \email  \\
       \addr Deka Investment GmbH, Germany
       }

\editor{}

\let\thefootnote\relax\footnotetext{The opinions expressed in this article are those of the authors and do not necessarily reflect views of Deka Investment GmbH.}\let\thefootnote\svthefootnot

\maketitle

\begin{abstract}%   <- trailing '%' for backward compatibility of .sty file
%Consider the idea of replacing the estimation of an expected value with the evaluation of a neural network. We study the expected value of a functional quantity on the solution of a stochastic differential equation. Even though a suitable neural network's training process could be rather time-consuming, we expect its evaluation to be highly efficient. I.e., once trained, a neural network could save further expensive estimations.
%
%E.g., in computational finance, Monte Carlo simulations are substantially used to estimate the expected value of option prices. Furthermore, option pricing is of continual interest, especially because new market data persistently leads to model adaptations. Therefore, one goal could be to train a neural network that replaces these costly Monte Carlo simulations.
%
%We study and compare two generic neural network training algorithms' computational complexity, addressing these circumstances under certain assumptions.
%
%Then, we introduce a new multilevel training algorithm that combines the deep learning algorithm introduced by \cite{beck2018solving} with the idea of multilevel Monte Carlo path simulation of \cite{giles2008multilevel}. The idea is to train several neural networks with training data computed from the so-called \textit{level estimators}, introduced by Giles. We show that the multilevel idea can reduce computational complexity by formulating a  computational complexity theorem.
In this work, we study the approximation of expected values of functional quantities on the solution of a stochastic differential equation (SDE), where we replace the Monte Carlo estimation with the evaluation of a deep neural network. Once the neural network training is done, the evaluation of the resulting approximating function is computationally highly efficient so that using deep neural networks to replace costly Monte Carlo integration is appealing, e.g., for near real-time computations in quantitative finance. However, the drawback of these nowadays widespread ideas lies in the fact that training a suitable neural network is likely to be prohibitive in terms of computational cost. We address this drawback here by introducing a multilevel approach to the training of deep neural networks. More precisely, we combine the deep learning algorithm introduced by Beck et al. \cite{beck2018solving} with the idea of multilevel Monte Carlo path simulation of Giles \cite{giles2008multilevel}. The idea is to train several neural networks, each having a certain approximation quality and computational complexity, with training data computed from so-called \textit{level estimators}, introduced by Giles \cite{giles2008multilevel}. We show that under certain assumptions, the variance in the training process can be reduced by shifting most of the computational workload to training neural nets at coarse levels where producing the training data sets is comparably cheap, whereas training the neural nets corresponding to the fine levels requires only a limited number of training data sets. We formulate a complexity theorem showing that the multilevel idea can indeed reduce computational complexity.
\end{abstract}

\begin{keywords}
  deep learning, multilevel, Monte Carlo, computational complexity
\end{keywords}

\section{Introduction} 
Consider a multi-dimensional SDE 
\begin{align}
dS(t)=\mu(S,t)\diff t + \sigma(S,t) \diff W(t), \text{ \hspace{3mm} } 0<t\le T,\label{sde-generic}
\end{align}
with initial value $S_0$, drift $\mu(S,t)$ and volatility $\sigma(S,t)$, which under certain conditions has a pathwise unique strong solution $S(t)$, see, e.g., \cite{kloeden2012numerical}. We are interested in the expected value $V(S(T))$, where $V(S)$ is a scalar function (payoff) and $S$ the solution of the above SDE. The Milstein discretization of the SDE with step-width $h$ is of the form
%\begin{align}
%\hat{S}_{n+1}=\hat{S}_{n} + a(\hat{S}_n,t_n)h + b(\hat{S}_n,t_n) \Delta W_n,\label{euler}
%\end{align}
\begin{align}
\begin{split}\hat{S}_{n+1}=\hat{S}_n&+\mu(\hat{S}_n,t_n)h+ \sigma(\hat{S}_n,t_n) \sqrt{h}\Delta Z_n \\&+ \frac{1}{2}\sigma(\hat{S}_n,t_n) \sigma'(\hat{S}_n,t_n)\left( (\sqrt{h} \Delta Z_n)^2-h\right),\end{split} \label{milstein}
\end{align}
with $\Delta Z_n$ i.i.d. standard normal for $n=0,\dots, T/h-1$, $t_n=nh$, and $\hat{S}_0=S_0$.
We denote the approximation of $V(S(T))$ using step-width $h$ as follows:
\begin{align}
P_h:=V(\hat{S}_{T/h}).\label{singlemc}
\end{align}
The mean of the sampled payoff values given by $N^{-1}\sum_{i=1}^N P_h^{(i)}$ from $N$ independent path simulations, is the simplest estimate of $\mathbb{E}[V(S(T))]$. Assuming certain conditions on the drift, volatility and payoff, see e.g. \cite{kloeden2012numerical}, the estimators' mean squared error (MSE) is asymptotically of the form
\begin{align*}
\text{MSE} \approx c_1N^{-1}+c_2h^2,
\end{align*}
with positive constants $c_1,c_2$.
Hence, we can achieve an error bound of $\mathcal{O}(\epsilon^{2})$, for any $\epsilon>0$, at a computational complexity of order $\mathcal{O}(\epsilon^{-3})$ for the MSE, i.e., it would require that $N=\mathcal{O}(\epsilon^{-2})$ and $h=\mathcal{O}(\epsilon^{-1})$.
\cite{giles2008multilevel} introduced the idea of multilevel Monte Carlo simulation, which achieves a complexity reduction. Under certain conditions and assuming, e.g., a Lipschitz-continuous payoff and the Milstein scheme, the complexity can be reduced to $\mathcal{O}(\epsilon^{-2})$, see \cite{giles2008improved}.

Let ${P}_l$ denote the approximations, defined by \eqref{singlemc}, using the discretizations $h_l=2^{-l}T$ for $l=0,1,\dots,L$. 
% and let 
%\begin{align*}
%\hat{P}_l:=N_l^{-1}\sum_{i=1}^{N_l} P\left( \hat{S}_{T/h_l}^{(i)} \right)
%\end{align*}
%denote the approximations using a discretization of the SDE with step-width $h_l$. 
We have
\begin{align*}
\mathbb{E}[{P}_{h_L}]= \mathbb{E}[{P}_{h_0}]+ \sum_{l=1}^L \mathbb{E}[{P}_{h_l}-{P}_{h_{l-1}}]
\end{align*}
and the multilevel Monte Carlo idea is to independently estimate each of the expectations on the right-hand side.
Therefore, consider the following so-called \textit{level estimators} for $\mathbb{E}[P_{h_0}]$ and $\mathbb{E}[{P}_{h_l}-{P}_{h_{l-1}}]$ defined by
\begin{align}
\hat{Y}_l= 
\begin{cases}
N_0^{-1} \sum_{i=1}^{N_0} {P}_{h_0}^{(i)} &\text{ for } l=0, \\
N_l^{-1}\sum_{i=1}^{N_l} \left( {P}_{h_l}^{(i)} -{P}_{h_{l-1}}^{(i)} \right),& \text{ for } l>0, \\
\end{cases} \label{levelmc}
\end{align}
using $N_l$ paths and where the two discrete approximations ${P}_{h_l}^{(i)} $ and ${P}_{h_{l-1}}^{(i)} $ come from the same Brownian path, such that the difference ${P}_{h_l}^{(i)} -{P}_{h_{l-1}}^{(i)}$ is often small due to the strong convergence properties of the Milstein scheme, see, e.g., \cite{giles2008multilevel} for further explanations. The final  multilevel estimator $\hat{Y}$ is given by the sum of the \textit{level estimators}:
\begin{align}
\hat{Y}=\sum\limits_{l=0}^L \hat{Y}_l.\label{multimc}
\end{align}

The complexity reduction of the multilevel approach depends on the \textit{level estimators} \eqref{levelmc}. Giles presented a quite general complexity theorem that can be applied to a variety of financial models and payoffs without using a specific numerical approximation scheme. Further studies are e.g. made in \cite{giles2008improved}.

In practice, the volatility term $\sigma(\cdot,\cdot)$ is persistently re-calibrated as it depends on the market-implied volatility. Instead of contributing new price computations arising from an updated volatility term, this work's key motivation is to replace these by estimating an appropriately trained neural network $\mathcal{N}$. We refer to \cite{higham2019deep} for an in-depth introduction to neural network training. As a first focus in this work, we will study different approaches to generate the necessary training data and correctly choose the input parameters.

The neural network maps the parameter vector (inputs) to the expected value of a payoff function (output). Thus, we reformulate the issue as a suitable stochastic optimization problem and solve it through an artificial neural network approximation.
We extend the above considerations to a set of stochastic differential equations and to a family of payoff functions. Consider the multi-dimensional stochastic differential equation
\begin{align}
dS(t)=\mu(S,t,a)\diff t + \sigma(S,t,b) \diff W(t), \text{ \hspace{3mm} } 0<t\le T,\label{sde-parameter}
\end{align}
with initial value $S(0)=s_0\in \mathbb{R}_+$, time of maturity $T\in\mathbb{R}_+$, drift $\mu(S,t,a)$ and volatility $\sigma(S,t,b)$, with $a=(a_1,\dots,a_m)\in \mathbb{R}^m$ and $b=(b_{1},\dots,b_{s})\in \mathbb{R}^s$.

Let the stochastic process ${S}_{a,b,s_0,T}$ be the solution of the SDE \eqref{sde-parameter} defined by the parameters $a,b,s_0$ and $T$. Consider a family of payoff functions $V(S,v)$, with $v=(v_1,\dots,v_r) \in \mathbb{R}^r$. Then, we will be interested in the expected value of
\begin{align}
P:y \mapsto V\left({S}_{a,b,s_0,T}(T),v\right),\label{py}
\end{align}
for fixed
\begin{align}
y:=(a,b,v,s_0,T)\in Y  \subset \mathbb{R}^{V}\times \mathbb{R}_+^2,\label{training-intervall}
\end{align}
with $V=m+s+r$. We call $Y$ the training set, i.e. it contains all parameter vectors of interest.
Finally, we will be interested in
\begin{align}
\bar{P}:y \mapsto \mathbb{E}[P(y)],\label{epy}
\end{align}
the input (model and payoff parameters) to price (expected value of the payoff) map and we aim to find an appropriate neuronal network for its approximation, i.e. a network $\mathcal{N}:Y\rightarrow \mathbb{R}$ minimizing
\begin{align}
\left\lVert \bar{P}(y)-\mathcal{N}(y)\right\rVert_{L^p},\label{approach1}
\end{align}
for $1\le p \le \infty$.

One intuitive approach to efficiently generate training data (outputs) for learning such a network is to use, e.g., a multilevel Monte Carlo estimator for reasonably chosen or randomly selected input values (for a high-dimensional case). One chooses a proper set of inputs of the form \eqref{training-intervall} and estimates the outputs \eqref{epy} at the required accuracy.

However, \cite{beck2018solving} presented an alternative approach: rather than using estimated prices for chosen input values, they use the individual sampled paths of the single level estimator \eqref{singlemc} for randomly selected input values. In other words, one uses randomly sampled inputs and estimates single paths of \eqref{py}, instead of \eqref{epy}, as outputs.
Within this work, we will compare both approaches with respect to computational complexity.

This work's main idea is to use the advantages of the multilevel Monte Carlo path simulation and combine it with the procedure of using only single paths as training data. I.e., we will present an approach, which trains several neural networks $\mathcal{N}_l:Y\rightarrow \mathbb{R}$, for $l=0,1,\dots,L$, where each network uses paths of the multilevel \textit{level estimators} \eqref{multimc} as training data. We will call this approach multilevel Monte Carlo learning.

Therefore, lets shortly extend the ideas of the Milstein payoff discretization \eqref{singlemc} and the multilevel \textit{level estimators} \eqref{multimc} to the reformulated optimization probelem \eqref{approach1} on a training set $Y$.

Consider using the Milstein scheme \eqref{milstein}, with step-width $h$, as an approximation of the solution of SDE \eqref{sde-parameter}, leading to 
\begin{align}
{P}_h:y\mapsto V\left(\left(\hat{S}_{{a,b,s_0,T}}\right)_{T/h},v\right),\label{milsteinp}
\end{align}
as an approximation of \eqref{py}.

Similar ideas lead to paths of the \textit{level estimators} on the training set given by 
\begin{align}
\hat{Y}_l: y\mapsto 
\begin{cases}
 {P}_{h_{0}}(y)&\text{ for } l=0, \\
  {P}_{h_{l}}(y)-{P}_{h_{l-1}}(y)),& \text{ for } l>0. \\
\end{cases} \label{levelmc2}
\end{align}
Hence, instead of using \eqref{milsteinp} for the training data, the training data for each network $\mathcal{N}_l$ will be simulated by using the paths \eqref{levelmc2}. All things considered, the multilevel Monte Carlo training approach searches the approximation
\begin{align}
\hat{\mathcal{N}}:= \sum_{l=0}^L \mathcal{N}_l, \label{multinet-intro}
\end{align}
which minimize \eqref{approach1}.

The rest of this work's structure is as follows: First of all, we will present an introductory example to show a possible strength and an easy implementation for which no theoretical knowledge is needed.

Section 2 will briefly review and compare both of the approaches mentioned above, using one neural network with respect to the computational complexity. We will call these single-level approaches. The derived complexities will be the references for this work's primary goal, further complexity reductions. Then, the main complexity theorem and an extension of numerical examples will be presented. Sections 3 and 4 will give some possible extensions and a conclusion.
In the appendix, we will discuss some mathematical background and present the proof. Furthermore, we will give some numerical results, studying the convergence with respect to the batch-size and some variance reduction effects.

\subsection{Introductory example}\label{sec-example}

We start by presenting an easy and short procedure to implement the multilevel approach, to which no further theoretical background knowledge is necessary for its application. 

Consider the following procedure to train a neural network to achieve the required accuracy. The practitioner chooses a promising network structure, fixes a limit for the number of training steps, chooses a batch-size (training data used in each training step), and uses the training algorithm as stated in \cite{beck2018solving}. The approach computes the needed training data in each training step by using single paths, as in \eqref{milsteinp}, on randomly selected inputs. If the resulting network does not achieve the required accuracy, the training process is repeated from scratch using increased batch-size, increased training steps, or adjusted network structure. 

The examples in this section will use the following variation: Though, using a fixed amount of training steps and fixed network structure for all training processes, we gradually increase the batch-size until the required accuracy is achieved. We will use a five-dimensional training set from a geometric Brownian motion's initial parameters since a closed solution is available for this example. 

\subsubsection{Single-level learning example}\label{single-level-example}

First, we train a single neural network, by successively increasing the batch-size, until the required accuracy is achieved. Consider a geometric Brownian motion
\begin{align}
\diff S(t)&=\mu S(t) \diff t + \sigma S(t) \diff W(t), \text{ \hspace{3mm} } 0<t\le T,\label{bs2}
\end{align}
with initial value $S(0)=s_0$, constant drift $\mu$ and constant volatility $\sigma$.

Furthermore, consider a European call option given by
\begin{align}
V(S,K):=\max\{S-K,0\},\label{europ-option}
\end{align}
with fixed strike price $K$. Hence, we want to approximate
\begin{align*}
P:y\mapsto V(S_{\mu,\sigma,s_0,T}(T),K)
\end{align*}
with $y=(\mu,\sigma,s_0,T,K)\in Y$.
Technically, the final price is denoted as the discounted expected payoff, which, for simplicity, we omit here.

Consider the training set $Y\subset \mathbb{R}^5$ to be of the specific form: 
\begin{align*}
Y=[0.02,0.05]\times[0.1,0.2]\times [80,120]    \times [0.9,1.0] \times [109,110].
\end{align*}
It is well known that the GBM \eqref{bs2} together with payoff \eqref{europ-option} lead to a closed solution for the option price for each $y\in Y$, see e.g. \cite{hull}. 

The idea is to train a neural network to evaluate the expected value up to a required error $\epsilon$ for each input vector $y$ within the training set $Y$. 
E.g., requiring accuracy of $\epsilon=0.01$ ($L^\infty$-error) of the trained network we use the feasible step-width $h=1/128$ for the Milstein scheme and the paths discretiazion \eqref{milsteinp}. 

As introduced above, we will use the algorithm of Beck et al., to which the PYTHON source code can be found in section 4 of \cite{beck2018solving}. The code uses the open-source software library TensorFlow. For our tests, we slightly modify the code: instead of using fixed learning-rate boundaries, we use exponential decay, which, however, would deliver similar results for specific decay parameters.

We obtained feasible results using network structure and training parameters for the introduced training set, as stated in \cref{parameter_net}, see \cite{higham2019deep} for further parameter explanations. The results of the training processes for increasing batch-sizes can be found in \cref{results1}. The computations were made on an Nvidia K80 GPU and were repeated $10$ times for each batch-size. The $L^\infty$ error was estimated by comparing the approximation with the closed solution on $2.000.000$ randomly selected initial parameter vectors.

\begin{table}
\centering
  \begin{tabular}{lr}
  \hline
 Parameter& Value\\
 \hline
% train steps & $200,000$\\
 neurons & $(50,50,1)$\\
 decay rate & $0.1$\\
 initial learning rate & $0.01$\\
  step rate & $40.000$\\
  training steps & $150.000$\\
 \hline
 \end{tabular}
 \caption{Structure and learning rate parameters for each neural network.}
  \label{parameter_net}
\end{table}

\begin{table}
 \small
\centering
  \begin{tabular}{c|c|c}
  \hline
 batch-size & mean of $L^\infty$  & time (hours)   \\
 \hline
 	125.000 & $0.0273$&  $2.32$h \\
   	 500.000  &	$0.0180 $ & $7$h 	\\
   	 2.000.000 & $0.0119 $ & $26.66$h \\
 \hline
 \end{tabular}
 \caption{Mean of the $L^\infty$-error for the single-level algorithms using the Milstein discretization \eqref{milstein}, whereat the simulations were repeated $10$ times. The last column shows the computation time using a Nvidia K80 GPU.  }
  \label{results1}
\end{table}

We see that a batch-size of $2.000.000$ achieves a feasible accuracy, to which the training process took $26.66$ hours.

\subsubsection{multilevel learning example}\label{multilevel-example}

Now, we explain the procedure for the multilevel approach. To present easy comparable results, we use the same network structure and amount of training steps used for the single-level approach for each of the networks we will train. Furthermore, we will use the same model setting and training set, as in the example above. However, we will use individualized batch-sizes. We will increase these likewise until the required accuracy is achieved.

We use the paths of the \textit{level estimators}, as described in \eqref{levelmc2}, to compute the training data in each training step. We present the modification of the code of Beck et al. used for the simulation of the \textit{level estimators'} paths, e.g., for $\hat{P}_1^{}-\hat{P}_{0}^{}$, in listing~\ref{algorithm2} in the appendix.

The used batch-sizes $M_l$ for each net and the amount of levels/nets $L$  are calculated with the multilevel sample estimator of Giles suggested in \cite{giles2008multilevel} (for which MATLAB codes can be found in \cite{gilescode}). The code originally estimates the Monte Carlo sample-sizes $N_l$ and amount of levels $L$ needed for the Multilevel approach to achieve a required accuracy $\epsilon$. Let us present an example of its application. For e.g. $\epsilon=0.01$ and $y=(0.05,0.2,100,1,110)$ the code delivers parameters as stated in \cref{batches-multilevel}.

\begin{table}
 \small
\centering
  \begin{tabular}{c|c|c|c|c|c|c|c|c}
  level $l$ & $0$ & $1$ & $2$ & $3$  & $4$ & $5$  & $6$  & $7$  \\  
\hline
$N_l$ & $3.000.000$ & $72695$ & $27756$ & $ 10550$ & $3691$ & $ 1308$ & $476 $ & $182 $\\
\hline 

 \end{tabular}
 \caption{Estimated needed Monte Carlo samples $N_l$ for the Multilevel Monte Carlo approach for $y=(0.05,0.2,100,1,110)$.}
  \label{batches-multilevel}
\end{table}

For the multilevel approach, we use these sample-sizes in the following way: We use the multilevel sample-ratios $N_l/N_0$, for $l=1,\dots L$, as the multilevel batch-size-ratios. Furthermore, we use the amount of multilevel levels $L$ as the amount of multilevel networks. 

I.e. we will multiply the above ratios with the initial batch-size $M_0$ used for network $\mathcal{N}_0$. Hence, we obtain $M_l=M_0\cdot N_l/N_0$ for $l=1,\dots L$ as the batch-sizes of the networks $\mathcal{N}_l$.

If we, e.g., study the ratios in \cref{batches-multilevel}, we see that the ratio between level one and level zero is given by $N_1/N_0=72.695/3.000.000\approx 0.024$. Now, by multiplying this ratio to an initial batch-size of e.g. $M_0=75.000$ delivers the batch-size $M_1=1817$, for the net $\mathcal{N}_1$ on level $l=1$. With increasing initial batch-sizes $M_0$, we obtain the batch-sizes for the remaining networks as presented in \cref{batches-intro}, at which, for a certain initial batch-size, we summarized the set of batch-sizes to a so-called multilevel id.

\begin{table}
 \small
\centering
  \begin{tabular}{c|c|c|c|c|c|c|c|c|c}
  \hline
 multilevel id &level $l$ & $0$ & $1$ & $2$ & $3$  & $4$ & $5$  & $6$  & $7$  \\  
\hline
1&$M_l$ & $75.000$ & $1817$ & $690$ & $264 $& $93 $& $ 33$& $ 12$& $ 5$  \\
2&$M_l$ & $300.000$ & $7268$ & $2760$ & $ 1056$& $ 372 $& $ 132 $& $ 48 $& $20 $  \\
3&$M_l$ & $1.200.000$ &$29072$ &$11040$&$4224$&$ 1488$&$528$&$192$& $ 80$ \\
\hline 
 \end{tabular}
 \caption{Estimated needed batch-size for the training of the specific level nets for $l=0,\dots,7$. }
  \label{batches-intro}
\end{table}

The multilevel training results using the estimated amount of levels and batch-sizes and its comparison to the single-level approach are given in \cref{results2}.

\begin{table}
 \small
\centering
  \begin{tabular}{c|l|c| l}
   \multicolumn{2}{c|}{single-level} & \multicolumn{2}{c}{multilevel}  \\
     \hline
 batch-size   & mean error (time)  &  id &  mean error (time)     \\
 \hline
  	125.000   & $0.0273 $ $(2.32\text{h}) $  &  1 &$ 0.0290$  $(4.15\text{h})$	\\
  	500.000   & $0.0182 $ $(7\text{h})$  & 2 &$ 0.0184$  $(5.29\text{h})$	\\
  	2.000.000  & $0.0119 $ $(26.66\text{h})$  & 3 & $ 0.0103$  $(11.18\text{h})$	\\
 \hline
 \end{tabular}
 \caption{Mean of the $L^\infty$-error for the single-level algorithm using the Milstein discretization (second column). The fourth column shows the mean of the $L^\infty$-error for the multilevel algorithm using respective batch-sizes as presented in \cref{batches-intro}. All simulations were repeated $10$ times. The brackets show needed computation time using a Nvidia K80 GPU.  }
  \label{results2}
\end{table}

Using the network structure, learning rates, and training steps of the single-level approach, we see that the multilevel approach leads to significant time-saving. 
For the lowest chosen batch-size ($125.000$), we see that single-level learning is slightly faster than multilevel learning. A reason for this could be that learning $8$ neural networks results in higher basic costs. However, if we require a higher error bound, e.g., as achieved for this low batch-size, the above-introduced procedure would suggest fewer neural networks.

\section{Neural network training}

In this section, we will study the error sources and computational costs. Therefore, we start with a more profound introduction to neural network training. Then, we compare the first introduced approach and the approach used in \cref{single-level-example} with respect to computational complexity. The second subsection will introduce the multilevel approach and present the computational complexity theorem.

\subsection{Single-level: error sources and complexity}

First, we want to discuss some neural networks' properties and their training processes, such as error sources and computational cost. For a more general introduction to neural networks, we refer to \cite{higham2019deep}. This work aims to train a neural network in such a way that it is capable of evaluating the expected value up to a required error $\epsilon$ for each input vector within the training set $Y$.

We study the training process of a generic artificial neural network (see e.g. \eqref{background} for the definition) which is given by the following series of functions
\begin{align}
\mathcal{N}_{\nu,\theta_i,{P}_h,{\mathbb{L}}_M}: Y\rightarrow \mathbb{R},\label{net1}
\end{align}
for $i=1,\dots,K$, with initial weights $\theta_0$. I.e. each of the $K$ training steps modifies the weights $\theta_i\in\mathbb{R}^{\nu}$ and hence defines a new element of the series. For each training process, we fix the network structure $\nu$ (layers and neurons), the approach ${P}_h$ to generate the training data and the amount of samples $M$ (batch-size) used to evaluate the loss function ${\mathbb{L}}_M$.

I.e., we will neither modify the estimation ${P}_h$ nor the batch-size $M$ during each of the training processes.

Demanding a specific error bound $\epsilon$, the challenge consists of choosing the most efficient parameters $\nu, M,h$, and $K$. 

Since, using a stochastic gradient descent algorithm, we will study the expectation of \eqref{approach1}. Therefore, we will be interested in finding certain weights after $K$ trainings steps satisfying
\begin{align}
\mathbb{E}\left[\left\lVert \bar{P}(y)-\mathcal{N}_{\nu,\theta_K,{P}_h,{\mathbb{L}}_M}(y)\right\rVert_{L^1}^2\right] <\epsilon^2,\label{single-mse}
\end{align}
for a required error $\epsilon>0$. Using the standard variance expansion, the left hand side can be expanded to
\begin{align}
\mathbb{V}\left[\left\lVert \bar{P}(y)-\mathcal{N}_{\nu,\theta_K,{P}_h,{\mathbb{L}}_M}(y)\right\rVert_{L^1}\right]+
\mathbb{E}\left[\left\lVert \bar{P}(y)-\mathcal{N}_{\nu,\theta_K,{P}_h,{\mathbb{L}}_M}(y)\right\rVert_{L^1}\right]^2.\label{single-mse2}
\end{align}
We specify the error sources by further decomposing the inner term of \eqref{single-mse} to obtain
\begin{align}
\left\lVert \bar{P}(y)-\mathcal{N}_{\nu,\theta_K,{P}_h,{\mathbb{L}}_M}(y)\right\rVert_{L^1}
\stackrel{\text{}}{\le}& \left\lVert \bar{P}(y)-\mathbb{E}[{P}_h(y)]\right\rVert_{L^1}\label{error-disk-sde}\tag{e1}\\
\stackrel{\text{}}{+}&  \left\lVert \mathbb{E}[{P}_h(y)]-\mathcal{N}_{\nu,\Theta,\mathbb{E}[{P}_h],\mathbb{L}}(y)\right\rVert_{L^1}\label{error-bestnet}\tag{e2}\\
\stackrel{\text{}}{+}&  \left\lVert \mathcal{N}_{\nu,\Theta,\mathbb{E}[{P}_h],\mathbb{L}}(y)-\mathcal{N}_{\nu,\Theta,{P}_h,\mathbb{L}}(y)\right\rVert_{L^1}\label{error-disk-payoff}\tag{e3}\\
\stackrel{\text{}}{+}&  \left\lVert \mathcal{N}_{\nu,\Theta,{P}_h,\mathbb{L}}(y)-\mathcal{N}_{\nu,\theta_K,{P}_h,\mathbb{L}}(y)\right\rVert_{L^1}\label{error-opti}\tag{e4}\\
\stackrel{\text{}}{+}&  \left\lVert \mathcal{N}_{\nu,\theta_K,{P}_h,\mathbb{L}}(y)-\mathcal{N}_{\nu,\theta_K,{P}_h,{\mathbb{L}}_M}(y)\right\rVert_{L^1}.\label{error-disk-loss}\tag{e5}
\end{align}
Let us shortly explain the above individual error sources, whereat we will have further studies on  the mathematical background of the decomposition in \cref{background}. The first line \eqref{error-disk-sde} describes the error made through the discretization of the stochastic process, using a discretization scheme with step-width $h$. Error \eqref{error-bestnet} describes the error made through the approximation of this function with a neural network with fixed structure $\nu$, whereat here we assume to know the best respective weights $\Theta$ (best fit). The error arising in \eqref{error-disk-payoff} results from the statistical error of using an approximation of the expectation. Finally, the optimization algorithm using $K$ iteration steps leads to \eqref{error-opti} and the approximation of the loss function by $M$ samples per training step results in \eqref{error-disk-loss}.

Since, we assume a fixed neural network structure $\nu$, the computational complexity $C_{\mathcal{N}}$ of the training process for a neural network can be bounded by
\begin{align*}
C_{\mathcal{N}}&\le c \left( K \cdot \text{cost}_{\text{ training step}}\right)
%&\le c \left[K  \cdot \text{cost}_{\text{ train data compu.} \cdot \text{ forward } + \text{ loss } + \text{ backprop }}  \right] \notag 
%&\le c_4 \left[ K \cdot \left( N M h^{-1} \nu + M + M \nu\right)\right]\\
%&\le c_5 K M N h^{-1},
\end{align*}
with a positive constant $c$, where each of the $K$ training steps consist of the computation of the training data, evaluating of the loss function and modifying the weights to minimize the loss function.

Now, we want to compare both methods mentioned in the introduction based on this error decomposition.  The first approach uses an estimation of the expectation (e.g., computed through multilevel Monte Carlo) as $P_h$, computed typically on deterministically selected input parameters (for a low dimensional case). The second approach, introduced by \cite{beck2018solving}, uses only single paths instead of an expectation estimation, as $P_h$, computed on randomly selected input parameter vectors.

At a first glance, if choosing a proper $h$, both approaches should have similar properties for the error sources \eqref{error-disk-sde}, \eqref{error-bestnet} and \eqref{error-opti}, but differ in \eqref{error-disk-payoff}, \eqref{error-disk-loss} and obviously in the computation cost. However, as we will see in \cref{unbiasedness}, error \eqref{error-disk-payoff} is negligible. 
Now, by interpreting \eqref{error-disk-loss} as an integral approximation problem using $M$ samples for the estimation, a possible description of the differences could be as follows: While we expect the first approach to have higher convergence order with respect to batch-size, it may suffer under the curse of dimensionality for high dimensional training sets. On the other hand, we expect the approach of  \cite{beck2018solving} to have a lower convergence order with respect to batch-size, but it could be computed without suffering from the curse of dimensionality. Furthermore, as explained in the introduction, the cost for the training data computed with the first approach is of order $\mathcal{O}(\epsilon^{-2}$, where the cost for a path is of order $\mathcal{O}(h^{-1})$.

Both approaches, using somewhat simplified assumptions, could lead to similar complexities presented in the following two lemmata for a low-dimensional case. For the first approach, studied in \cref{corollar2.3}, we assume the convergence order of the loss function with respect to $M$ (batch-size) to be one. For the second approach, studied in \cref{corollar2.2}, we assume the loss function's convergence order with respect to $M$ to be $1/2$. For both approaches, we will assume a convergence of $1/2$ with respect to training steps $K$, and we consider a network structure $\nu$ such that \eqref{error-bestnet} can be neglected.

\begin{lemma}\label{corollar2.3} 
Consider the training of a neural network, as described in \eqref{net1}, for the approximation of \eqref{epy} on a training set $Y\subset \mathbb{R}$. If there exist positive constants $c_1,c_2,c_3$ and $c_4$ such that the decomposition \eqref{error-disk-sde}-\eqref{error-disk-loss} of \eqref{single-mse} can be bounded by
\begin{align*}
\eqref{error-disk-sde} &\le \frac{1}{4}\epsilon,   & \mathbb{E}[\eqref{error-opti}] &\le c_3 K^{-1/2},& \mathbb{E}[\eqref{error-disk-loss}] &\le c_2 M^{-1},\\
 \eqref{error-bestnet} &\le \frac{1}{4}\epsilon,  &  \mathbb{V}[\eqref{error-opti}] &\le c_3^2 K^{-1}, & \mathbb{V}[\eqref{error-disk-loss}] &\le c_2^2 M^{-2}, \\
\eqref{error-disk-payoff} &=0,  & & 
\end{align*}

such that the computational complexity of the training is be bounded by
 \begin{align*}
C_{\mathcal{N}}&\le c_4  K  M \epsilon^{-2}.
\end{align*}
Then there exist positive constants $c_5,M$ and $K$ such that  the overall error \eqref{single-mse} can be bounded by $\epsilon^2$ with a computational complexity bound of
\begin{align*}
C_{\mathcal{N}}\le c_{5} \epsilon^{-5}.
\end{align*}
\end{lemma}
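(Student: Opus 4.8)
The plan is to control the mean–square error \eqref{single-mse} through the five–term decomposition \eqref{error-disk-sde}--\eqref{error-disk-loss}, and then to fix the number of training steps $K$ and the batch-size $M$ as explicit powers of $\epsilon$. First I would split the five contributions into a deterministic part and a stochastic part. Writing $E:=\lVert \bar{P}(y)-\mathcal{N}_{\nu,\theta_K,{P}_h,{\mathbb{L}}_M}(y)\rVert_{L^1}$ for the total error, the triangle inequality gives $0\le E\le a+X+Y$, where $a$ collects the deterministic discretization, best-fit and statistical errors \eqref{error-disk-sde}, \eqref{error-bestnet}, \eqref{error-disk-payoff}, so that $a\le\tfrac14\epsilon+\tfrac14\epsilon+0=\tfrac12\epsilon$, and $X,Y$ denote the random optimization and loss-sampling errors \eqref{error-opti}, \eqref{error-disk-loss}.

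Since $E\ge0$ and $E\le a+X+Y$ pointwise, the inequality survives squaring and taking expectations, so rather than literally splitting \eqref{single-mse} into the variance and squared-mean of \eqref{single-mse2} I would directly estimate
\begin{align*}
\mathbb{E}[E^2]\le \mathbb{E}\big[(a+X+Y)^2\big]= a^2 + \mathbb{E}[X^2] + \mathbb{E}[Y^2] + 2a\,\mathbb{E}[X] + 2a\,\mathbb{E}[Y] + 2\,\mathbb{E}[XY].
\end{align*}
Here the hypotheses are used twice over: $\mathbb{E}[X^2]=\mathbb{V}[X]+\mathbb{E}[X]^2\le 2c_3^2 K^{-1}$ and $\mathbb{E}[Y^2]=\mathbb{V}[Y]+\mathbb{E}[Y]^2\le 2c_2^2 M^{-2}$, the cross term is handled by Cauchy--Schwarz, $\mathbb{E}[XY]\le (\mathbb{E}[X^2]\,\mathbb{E}[Y^2])^{1/2}\le 2c_2c_3 K^{-1/2}M^{-1}$, and the mixed terms are bounded using $a\le\tfrac12\epsilon$ together with $\mathbb{E}[X]\le c_3 K^{-1/2}$ and $\mathbb{E}[Y]\le c_2 M^{-1}$. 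Collecting everything leaves $\mathbb{E}[E^2]\le a^2+(\text{terms that are }o(1)\text{ as }K,M\to\infty)$, with $a^2\le\tfrac14\epsilon^2$ consuming exactly a quarter of the budget.

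The decisive observation is that every remaining term is $O(\epsilon^2)$ once I set $K=\lceil C_K\epsilon^{-2}\rceil$ and $M=\lceil C_M\epsilon^{-1}\rceil$: then $K^{-1}\sim\epsilon^2$, $M^{-2}\sim\epsilon^2$, and each mixed or cross term carries one factor $\epsilon$ from $a$ (or from $K^{-1/2}\sim\epsilon$) multiplying one factor $\epsilon$ from the convergence rate. Taking the constants $C_K,C_M$ large enough — equivalently, making $c_3K^{-1/2}$ and $c_2M^{-1}$ a sufficiently small fixed multiple of $\epsilon$ — forces the sum of all non-$a^2$ terms below $\tfrac34\epsilon^2$, whence $\mathbb{E}[E^2]\le\epsilon^2$, i.e.\ \eqref{single-mse}. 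This apportioning of the $\epsilon^2$-budget between the deterministic quarter and the stochastic three-quarters is the one genuinely delicate step; everything else is bookkeeping.

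Finally I would substitute these choices into the assumed complexity bound $C_{\mathcal{N}}\le c_4 K M \epsilon^{-2}$. With $K=O(\epsilon^{-2})$ and $M=O(\epsilon^{-1})$ this yields $C_{\mathcal{N}}\le c_4\,(C_K\epsilon^{-2})(C_M\epsilon^{-1})\,\epsilon^{-2}=c_5\,\epsilon^{-5}$ with $c_5:=c_4 C_K C_M$, which is the claimed $\mathcal{O}(\epsilon^{-5})$ complexity. I expect the main obstacle to be purely the careful treatment of the correlated errors $X,Y$ — no independence of the optimization and loss-sampling errors is assumed, so the Cauchy--Schwarz estimate of $\mathbb{E}[XY]$ is essential — and the bound $\mathbb{E}[E^2]\le\mathbb{E}[(a+X+Y)^2]$, which legitimately replaces the variance/bias split of \eqref{single-mse2} precisely because $E$ is a nonnegative norm dominated by the sum of its parts.
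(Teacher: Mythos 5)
Your proof is correct and follows the same essential strategy as the paper's: choose $K=\mathcal{O}(\epsilon^{-2})$ and $M=\mathcal{O}(\epsilon^{-1})$ so that every stochastic contribution to the mean squared error becomes an adjustable multiple of $\epsilon^2$, then substitute into the assumed cost bound $C_{\mathcal{N}}\le c_4 K M\epsilon^{-2}$ to obtain $\mathcal{O}(\epsilon^{-5})$. (For orientation: the argument in the paper that actually matches the hypotheses of this lemma is the one printed under the heading ``Proof of \cref{corollar2.2}'', with $K=32c_3^2\epsilon^{-2}$ and $M=\sqrt{32}c_2\epsilon^{-1}$; the two proof labels appear to be interchanged.) The one point where you genuinely deviate is in how the second moment is controlled. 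The paper uses the variance/squared-bias split \eqref{single-mse2}, bounds the squared bias by $\tfrac12\epsilon^2$ via the sum of the five expectations, and bounds the variance by the sum $\mathbb{V}[\eqref{error-opti}]+\mathbb{V}[\eqref{error-disk-loss}]$ --- a step that implicitly treats the two random error terms as uncorrelated and treats the variance of the dominated quantity as if it were bounded by that of the dominating sum, which is not a monotonicity that variance enjoys. Your direct expansion of $\mathbb{E}[(a+X+Y)^2]$, legitimate because $0\le E\le a+X+Y$ pointwise so squaring preserves the inequality, together with Cauchy--Schwarz for the cross term $\mathbb{E}[XY]$, sidesteps both issues at the cost of slightly larger constants. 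Both arguments yield the same parameter scalings and the same $\epsilon^{-5}$ complexity, so the difference is one of rigor in the bookkeeping rather than of substance.
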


\begin{proof}
See \cref{proof}.
\end{proof}

\begin{lemma}\label{corollar2.2} 
Consider the training of a neural network, as described in \eqref{net1}, for the approximation of \eqref{epy} on a training set $Y\subset \mathbb{R}$. If there exist positive constants $c_1,c_2,c_3$ and $c_4$ such that the decomposition \eqref{error-disk-sde}-\eqref{error-disk-loss} of \eqref{single-mse} can be bounded by
\begin{align*}
\eqref{error-disk-sde} &\le c_1 h,   & \mathbb{E}[\eqref{error-opti}] &\le c_3 K^{-1/2},& \mathbb{E}[\eqref{error-disk-loss}] &\le c_2 M^{-1/2},\\
 \eqref{error-bestnet} &\le \frac{1}{4}\epsilon,  &  \mathbb{V}[\eqref{error-opti}] &\le c_3^2 K^{-1}, & \mathbb{V}[\eqref{error-disk-loss}] &\le c_2^2 M^{-1}, \\
\eqref{error-disk-payoff} &=0,  & & 
\end{align*}

such that the computational complexity of the training is be bounded by
 \begin{align*}
C_{\mathcal{N}}&\le c_4 K M  h^{-1}.
\end{align*}
Then there exist positive constants $c_5, h,M$ and $K$ such that  the overall error \eqref{single-mse} can be bounded by $\epsilon^2$ with a computational complexity bound of
\begin{align*}
C_{\mathcal{N}}\le c_{5} \epsilon^{-5}.
\end{align*}
\end{lemma}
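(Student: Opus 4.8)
The plan is to bound the mean-squared error \eqref{single-mse} from above by the second moment of the sum of the five error contributions, and then to tune $h$, $K$, and $M$ so that every contribution is of order $\epsilon$ while keeping the product $KMh^{-1}$ of order $\epsilon^{-5}$.

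First I would set $E := \lVert \bar{P}(y) - \mathcal{N}_{\nu,\theta_K,P_h,\mathbb{L}_M}(y)\rVert_{L^1}$ and let $S$ denote the sum of the right-hand sides of the triangle-inequality decomposition \eqref{error-disk-sde}--\eqref{error-disk-loss}. Since $E$ is a norm, $0 \le E \le S$, hence $E^2 \le S^2$ and the left-hand side of \eqref{single-mse} satisfies $\mathbb{E}[E^2] \le \mathbb{E}[S^2]$. Applying the variance expansion \eqref{single-mse2} to $S$ gives $\mathbb{E}[S^2] = \mathbb{E}[S]^2 + \mathbb{V}[S]$, so it suffices to control these two terms separately. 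Here I would exploit that \eqref{error-disk-sde} and \eqref{error-bestnet} are deterministic and \eqref{error-disk-payoff}$=0$, so only the optimization term \eqref{error-opti} and the loss-sampling term \eqref{error-disk-loss} are random. For the mean, adding the assumed bounds yields
\begin{align*}
\mathbb{E}[S] \le c_1 h + \tfrac14\epsilon + c_3 K^{-1/2} + c_2 M^{-1/2},
\end{align*}
while for the variance only the two random terms contribute; using sub-additivity via Cauchy--Schwarz (or the elementary $\mathbb{V}[X+Y]\le 2\mathbb{V}[X]+2\mathbb{V}[Y]$) together with the assumed variance bounds gives $\mathbb{V}[S] \le 2c_3^2 K^{-1} + 2c_2^2 M^{-1}$.

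Next I would calibrate the parameters: choosing $h = \epsilon/(8c_1)$, $K = 64\,c_3^2\,\epsilon^{-2}$, and $M = 64\,c_2^2\,\epsilon^{-2}$ makes each tunable mean contribution equal to $\tfrac18\epsilon$, so $\mathbb{E}[S] \le \tfrac58\epsilon$ and $\mathbb{E}[S]^2 \le \tfrac{25}{64}\epsilon^2$; the same choices give $\mathbb{V}[S] \le \tfrac{4}{64}\epsilon^2$, whence $\mathbb{E}[S^2] \le \tfrac{29}{64}\epsilon^2 < \epsilon^2$. Inserting these into the complexity hypothesis,
\begin{align*}
C_{\mathcal{N}} \le c_4 K M h^{-1} = c_4 \cdot 64 c_3^2 \epsilon^{-2} \cdot 64 c_2^2 \epsilon^{-2} \cdot 8 c_1 \epsilon^{-1} = c_5 \epsilon^{-5},
\end{align*}
with $c_5 = 2^{15} c_1 c_2^2 c_3^2 c_4$, which is the claimed bound.

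The parameter substitution is routine algebra; the one genuine subtlety is the variance step, where a naive bound on $\mathbb{V}[S]$ would require controlling the covariance between the optimization error \eqref{error-opti} and the loss-sampling error \eqref{error-disk-loss}. I expect this to be the main obstacle, but it is handled cleanly by sub-additivity, which costs only a harmless constant factor and needs no independence assumption. I would also remark that the argument is structurally identical to \cref{corollar2.3}: only the $M$-exponents and the $h^{-1}$ versus $\epsilon^{-2}$ data-cost factor differ, and in both cases the three cost factors multiply to $\epsilon^{-5}$.
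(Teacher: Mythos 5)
Your proof is correct and follows essentially the same route as the paper: a bias--variance split of \eqref{single-mse} with $h\sim\epsilon$, $K\sim\epsilon^{-2}$, $M\sim\epsilon^{-2}$, whose product with $h^{-1}$ yields the $\epsilon^{-5}$ cost (note that the paper's printed proofs of \cref{corollar2.2} and \cref{corollar2.3} are evidently interchanged, and the one matching these hypotheses chooses $K=32c_3^2\epsilon^{-2}$, $M=32c_2^2\epsilon^{-2}$, $h=\epsilon/(c_1\sqrt{32})$). Your treatment of the variance is in fact slightly more careful than the paper's: passing to $\mathbb{E}[S^2]=\mathbb{E}[S]^2+\mathbb{V}[S]$ for the dominating sum $S$ and invoking $\mathbb{V}[X+Y]\le 2\mathbb{V}[X]+2\mathbb{V}[Y]$ avoids both the unjustified monotonicity step $\mathbb{V}[E]\le\mathbb{V}[S]$ and the covariance term that the paper silently drops when it adds the two variances directly.
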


\begin{proof}
See \cref{proof}.
\end{proof}

If the assumptions are reasonable, improved numerical integration techniques such as sparse grid integration for the first approach, see, e.g., \cite{gerstner1998numerical}, or variance reduction,  for the second approach, see, e.g., \cite{glasserman}, should reduce the overall complexity. Further studies on the implications of variance reduction techniques for the second approach will follow in \cref{numerical-results}.

\subsection{Multilevel Monte Carlo training}\label{multilevel-error-sec}

The following section will contain the error decomposition, the main complexity theorem, and a numerical experiment for the multilevel Monte Carlo training approach. 

\subsubsection{error decomposition}
In this section, we will study the error sources of the multilevel training approach.

Since, the approach consists in training of individual neural networks for each level, we aim to find suitable weights on each level, such that the networks are approximations of the expectations of \eqref{levelmc2}. We aim to bound
\begin{align*}
\left\rVert  \mathbb{E}[\hat{Y}_l(y)]- \mathcal{N}_{\nu,\theta^l_{K_l},\hat{Y}_l,{\mathbb{L}}_{M_l}}(y) \right\rVert_{L^p},
\end{align*}
for each level $l=0,\dots,L$ using $K_l$ training steps and loss functions discretized by using batch-size $M_l$. Again, we assume all networks to have the same fixed structure $\nu$. 

Then, the final multilevel approximation $\hat{\mathcal{N}}$ will be given by
\begin{align}
\hat{\mathcal{N}}: y \mapsto \mathcal{N}_{\nu,\theta^0_{K_0},\hat{Y}_0,\hat{\mathbb{L}}_{M_0}}(y) + \sum\limits_{l=1}^L \mathcal{N}_{\nu,\theta^l_{K_l},\hat{Y}_l,\hat{\mathbb{L}}_{M_l}}(y),\label{multilevel-definition}
\end{align}
whereat it should satisfy
\begin{align}
\mathbb{E}\left[\left\lVert \bar{P}(y)-\hat{\mathcal{N}}(y)\right\rVert_{L^1}^2\right]< \epsilon^2,\label{error-bound-multi}
\end{align}
for a required error $\epsilon>0$. Using the standard variance expansion, the left hand side can be expanded to
\begin{align}
%&\mathbb{E}\left[\left\lVert \bar{P}(y)-\hat{\mathcal{N}}(y)\right\rVert_{L^1}^2\right]\\
&
\mathbb{V}\left[\left\lVert \bar{P}(y)-\hat{\mathcal{N}}(y)\right\rVert_{L^1}\right]+
\mathbb{E}\left[\left\lVert \bar{P}(y)-\hat{\mathcal{N}}(y)\right\rVert_{L^1}\right]^2.\label{decomposition}
\end{align}
Again, we specify the error sources by decomposing the inner term of \eqref{error-bound-multi} by
\begin{align}
\left\lVert \bar{P}(y) - \hat{\mathcal{N}}(y)\right\rVert_{L^1}
\stackrel{\text{}}{\le}& \left\lVert \bar{P}(y)-\mathbb{E}[\hat{P}_{h_L}(y)]\right\rVert_{L^1}\label{error-a}\tag{E1}\\
\stackrel{\text{}}{+}& \sum_{l=0}^L \left\lVert \mathbb{E}[\hat{Y}_l(y)]-\mathcal{N}_{\nu,\Theta^l,\mathbb{E}[\hat{Y}_l],{\mathbb{L}}}(y)\right\rVert_{L^1}\label{error-b}\tag{E2}\\
+&\sum_{l=0}^L   \left\lVert  \mathcal{N}_{\nu,\Theta^l,\mathbb{E}[\hat{Y}_l],{\mathbb{L}}}(y)- \mathcal{N}_{\nu,\Theta^l,\hat{Y}_l,{\mathbb{L}}}(y)\right\rVert_{L^1}\label{error-c}\tag{E3}\\
+&\sum_{l=0}^L  \left\lVert \mathcal{N}_{\nu,\Theta^l,\hat{Y}_l,{\mathbb{L}}}(y)-\mathcal{N}_{\nu,\theta^l_{K_l},\hat{Y}_l,{\mathbb{L}}}(y))\right\rVert_{L^1}\label{error-d}\tag{E4}\\
+&\sum_{l=0}^L  \left\lVert \mathcal{N}_{\nu,\theta^l_{K_l},\hat{Y}_l,{\mathbb{L}}}(y)-\mathcal{N}_{\nu,\theta^l_{K_l},\hat{Y}_l,{\mathbb{L}}_{M_l}}(y)\right\rVert_{L^1}\label{error-e}\tag{E5}.
\end{align}

The explanations of the error sources are analogous to those of \eqref{error-disk-sde} to \eqref{error-disk-loss}, whereas from \eqref{error-b} to \eqref{error-e} we use the sum of the $L+1$ errors.

Again by fixing the net structures, the computational complexity $C_{\mathcal{N}_l}$ of the training for each network $\mathcal{N}_l:=\mathcal{N}_{\nu,\theta^l_{K_l},\hat{Y}_l,{\mathbb{L}}_{M_l}}$ is bounded by
\begin{align}
C_{\mathcal{N}_l}&\le c \left( K_l \cdot \text{cost}_{\text{training step on level}}\right),\label{cost-multi}
%&\le c \left[K_l  \cdot \text{cost}_{\text{ train data compu.} \cdot \text{ forward } + \text{ loss } + \text{ backprop }}  \right] \notag 
%&\le c_4 \left[ K \cdot \left( N M h^{-1} \nu + M + M \nu\right)\right]\\
%&\le c_5 K M N h^{-1},
\end{align}
with a positive constant $c$. Each of the $K_l$ training steps consist of the computation of the $M_l$ training data of type $\hat{Y}_l$, the calculation of the loss function $\hat{\mathbb{L}}_l$ and the modification of the weights to minimize the loss function.

\subsubsection{complexity theorem}\label{complexity}

In this subsection, we present the main complexity theorem. Since further studies on the error sources are required, we will word some assumptions quite generally. The idea of theoretical complexity reduction can be described as follows. The multilevel approach uses different time-steps for the discretization to obtain a complexity reduction. Under certain circumstances, this induces different amounts of random samples on each level, especially fewer samples on the finer discretizations. The idea can be transformed into multilevel training, which - when using finer path discretizations - we expect to result in lower needed batch-sizes. As introduced by \cite{giles2008multilevel} the variance of the \textit{level estimators} needs to be bounded with an order of $\mathcal{O}(h_l^\beta)$, for $\beta>0$. For e.g. a European call option Giles, Debrabant and R\"ossler  prove $\beta=2$ in \cite{giles2013numerical}.

\begin{theorem}\label{complexity-thm}
Consider a multilevel training process as described in \eqref{multilevel-definition}, for the approximation of \eqref{epy} on a specific training set $Y$.
If there exist positive constants $\alpha\ge 1/2, \beta, \gamma, \eta, c_1,c_2,c_3$ and $c_4$ such that the decomposition \eqref{error-a}-\eqref{error-e} of \eqref{error-bound-multi} can be bounded by
\begin{align}
\left\lVert \bar{P}(y)-\mathbb{E}[\hat{P}_{h_l}(y)]\right\rVert_{L^1} &\le c_1  h_l, \tag{A1}\label{a1}\\
\left\lVert \mathbb{E}[\hat{Y}_l(y)]-\mathcal{N}_{\nu,\Theta^l,\mathbb{E}[\hat{Y}_l],{\mathbb{L}}}(y)\right\rVert_{L^1}&\le \frac{1}{\sqrt{32}}\epsilon,  \tag{A2}\label{a2}\\
 \left\lVert  \mathcal{N}_{\nu,\Theta^l,\mathbb{E}[\hat{Y}_l],{\mathbb{L}}}(y)- \mathcal{N}_{\nu,\Theta^l,\hat{Y}_l,{\mathbb{L}}}(y)\right\rVert_{L^1} &=0, \tag{A3}\label{a3}\\
\mathbb{E}\left[ \left\lVert \mathcal{N}_{\nu,\Theta^l,\hat{Y}_l,{\mathbb{L}}}(y)-\mathcal{N}_{\nu,\theta^l_{K_l},\hat{Y}_l,{\mathbb{L}}}(y))\right\rVert_{L^1}\right]&\le c_2  h_l^{\alpha\gamma} K_l^{-1/2}\tag{A4 i},\label{a4i}\\
\mathbb{V}\left[ \left\lVert \mathcal{N}_{\nu,\Theta^l,\hat{Y}_l,{\mathbb{L}}}(y)-\mathcal{N}_{\nu,\theta^l_{K_l},\hat{Y}_l,{\mathbb{L}}}(y))\right\rVert_{L^1}\right]&\le c_2^2  h_l^{2\alpha\gamma} K_l^{-1}\tag{A4 ii},\label{a4ii}\\
\mathbb{E}\left[\left\lVert \mathcal{N}_{\nu,\theta^l_{K_l},\hat{Y}_l,{\mathbb{L}}}(y)-\mathcal{N}_{\nu,\theta^l_{K_l},\hat{Y}_l,{\mathbb{L}}_{M_l}}(y)\right\rVert_{L^1}\right] &\le c_3  h_l^{\beta/2} \rho_l^{\eta} M_l^{-1/2}\tag{A5 i},\label{a5i}\\
\mathbb{V}\left[\left\lVert \mathcal{N}_{\nu,\theta^l_{K_l},\hat{Y}_l,{\mathbb{L}}}(y)-\mathcal{N}_{\nu,\theta^l_{K_l},\hat{Y}_l,{\mathbb{L}}_{M_l}}(y)\right\rVert_{L^1}\right] &\le c_3^2  h_l^{\beta} \rho_l^{2\eta} M_l^{-1}\tag{A5 ii},\label{a5ii}
\end{align}
with  $\rho_l=K_l^{-1/2}$, such that the computational complexity of the training for each net is be bounded by 
\begin{align}
C_l\le c_4 h_l^{-1}  M_l K_l.\label{complexity-bound}
\end{align}

Then, there exists a positive constants $c_5$ such that for any $\epsilon<e^{-1}$, there are values $L,M_l$ and $K_l$ for which
\begin{align}
\mathbb{E}\left[\left\lVert \bar{P}(y)-\hat{\mathcal{N}}(y)\right\rVert_{L^1}^2\right]\label{net-mse2}
\end{align}
 can be bounded by $\epsilon^2$, with a computational complexity $C_{\mathcal{N}}$ with bound
%\begin{align*}
%C_{\mathcal{N}}\le c_{6} \epsilon^{-3.5} (\log \epsilon)^5.
%\end{align*}
\begin{align*}
C_{\mathcal{N}} \le c_{5}
\begin{cases}
\epsilon^{-3.0}, & \text{ for } \eta=0.5,\hspace{5pt} \gamma=2  ,\beta = 2, \alpha=1, \\
\epsilon^{-3.0} \left|\log \epsilon\right|^4, & \text{ for } \eta=0.5,\hspace{5pt} \gamma=1 ,\beta = 1, \alpha=1, \\
\epsilon^{-3.5} \left|\log \epsilon\right|^4, & \text{ for } \eta=0.25, \gamma=1 ,\beta = 1, \alpha=1, \\
 \epsilon^{-4}\left|\log \epsilon\right|^5,& \text{ for } \eta=0,\hspace{13pt} \gamma=0, \beta = 1, \alpha=1, \\
  \epsilon^{-5}\left|\log \epsilon\right|^4,& \text{ for } \eta=0,\hspace{13pt} \gamma=0, \beta = 0, \alpha=1, \\
    \epsilon^{-6}\left|\log \epsilon\right|^4,& \text{ for } \eta=0,\hspace{13pt} \gamma=0, \beta = 0, \alpha=1/2. \\
\end{cases} 
\end{align*}
\end{theorem}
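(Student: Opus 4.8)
The plan is to follow the structure of Giles' multilevel complexity theorem \cite{giles2008multilevel}, generalized to the fact that each ``level estimator'' is now replaced by a trained network whose error is governed by two discretization parameters---the number of optimization steps $K_l$ and the batch size $M_l$---in addition to the path step-width $h_l=2^{-l}T$. First I would split the mean-squared error \eqref{net-mse2} into its variance and squared-bias parts as in \eqref{decomposition}. For the bias I would use the triangle inequality \eqref{error-a}--\eqref{error-e}, so that $\mathbb{E}[\lVert\bar P-\hat{\mathcal N}\rVert_{L^1}]$ is bounded by the sum of \eqref{error-a}, \eqref{error-b}, the (vanishing, by \eqref{a3}) term \eqref{error-c}, and the expectations of \eqref{error-d} and \eqref{error-e}. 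For the variance I would use that the $L+1$ level networks are trained on independent sample streams, so the variance of the sum is the sum of the per-level variances, bounding the within-level cross term between \eqref{error-d} and \eqref{error-e} by $2ab\le a^2+b^2$; the per-level contributions are then $c_2^2 h_l^{2\alpha\gamma}K_l^{-1}$ from \eqref{a4ii} and, after substituting $\rho_l=K_l^{-1/2}$, $c_3^2 h_l^{\beta}K_l^{-\eta}M_l^{-1}$ from \eqref{a5ii}.

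Next I would fix the number of levels $L$. Since \eqref{a1} gives weak order one, the discretization bias \eqref{error-a} equals $c_1 h_L=c_1 2^{-L}T$, so choosing $L=\lceil\log_2(c/\epsilon)\rceil$ makes this term $O(\epsilon)$ and forces $L+1=O(\lvert\log\epsilon\rvert)$. The best-fit term \eqref{error-b} is absorbed using \eqref{a2}. The remaining budget is split so that the variance is at most a constant multiple of $\epsilon^2$ and the squared bias at most $\epsilon^2/2$; in the boundary regimes ($\beta=1$ or $\gamma=0$) controlling the bias of \eqref{error-d} and \eqref{error-e} relative to their variances costs factors of $L+1$, which together with $L+1=O(\lvert\log\epsilon\rvert)$ produce the $\lvert\log\epsilon\rvert$ powers in the final table.

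The heart of the proof is then a constrained optimization: minimize the total cost $\sum_{l=0}^L c_4 h_l^{-1}M_l K_l$ from \eqref{complexity-bound} subject to the variance budget $\sum_{l=0}^L\big(c_2^2 h_l^{2\alpha\gamma}K_l^{-1}+c_3^2 h_l^{\beta}K_l^{-\eta}M_l^{-1}\big)\le c\,\epsilon^2$. I would solve this with Lagrange multipliers (equivalently, an explicit Giles-type allocation), obtaining $M_l$ and $K_l$ as powers of $h_l$ and of $\epsilon$; the total cost then collapses to geometric sums of the form $\sum_{l=0}^L h_l^{r}$. The case analysis is standard: for $r>0$ the sum is $O(1)$ (dominated by the coarse level $l=0$), for $r=0$ it is $O(L)=O(\lvert\log\epsilon\rvert)$, and for $r<0$ it is $O(h_L^{r})=O(\epsilon^{r})$ (dominated by the finest level $l=L$). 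Evaluating the exponents $r$ that arise from each regime $(\eta,\gamma,\beta,\alpha)$ reproduces the six lines of the table, with the hypothesis $\alpha\ge 1/2$ playing the role of Giles' balance condition guaranteeing that the coarse levels can carry the workload.

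The hard part will be the two-dimensional per-level optimization. Unlike the classical theorem, where each level has a single sample count $N_l$, here $M_l$ and $K_l$ enter the cost multiplicatively as $M_lK_l$ while appearing with different exponents in the two variance terms (and $K_l$ also enters the \eqref{a5ii}-variance through $\rho_l^{2\eta}=K_l^{-\eta}$). This couples the two sequences so that they cannot be chosen independently. I would address it either by the simplification $K_l\equiv K$ used in the numerical experiments---which reduces the problem to a single global training-step count $K$ fixed by \eqref{a4i}--\eqref{a4ii} together with a Giles-type choice of $M_l$---or by carrying the full joint Lagrangian. In either case the delicate bookkeeping is to track the interacting geometric sums and the integer rounding of $M_l$ and $K_l$, each of which can inject an additional $\lvert\log\epsilon\rvert$ factor, so as to land exactly on the stated powers of $\lvert\log\epsilon\rvert$.
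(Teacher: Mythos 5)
Your plan follows essentially the same route as the paper's proof: split the MSE via \eqref{decomposition} into squared bias and variance, bound the bias by the telescoped triangle inequality \eqref{error-a}--\eqref{error-e}, fix $L=O(|\log\epsilon|)$ so that $c_1h_L^\alpha\le\epsilon/\sqrt{32}$, choose $M_l$ and $K_l$ as powers of $h_l$ and $\epsilon$, and reduce the cost $\sum_l h_l^{-1}M_lK_l$ to geometric sums $\sum_l h_l^{r}$ whose trichotomy ($r>0$, $r=0$, $r<0$) yields the six rows of the table. The paper does not actually run a Lagrangian optimization; it simply posits one bespoke allocation for the best case ($K_l\propto\epsilon^{-2}h_l^{1.5}$, $M_l\propto\epsilon^{-1}h_l^{3/4}$) and one generic allocation $K_l=\lceil 32\epsilon^{-2}c_2^2h_l^{2\gamma\alpha}(L+1)^2\rceil$, $M_l=\lceil 32\epsilon^{-2+2\eta}c_3^2h_l^{\beta-2\alpha\gamma\eta}(L+1)^2\rceil$ for the remaining cases, then verifies the error and cost bounds directly; the $(L+1)^2$ padding in both sequences is the actual source of the $|\log\epsilon|^4$ factors, rather than the bias-versus-variance tension you describe. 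Your treatment of the variance (independence across levels plus $2ab\le a^2+b^2$ within a level) is, if anything, more careful than the paper's.

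The one concrete weakness is your proposed fallback $K_l\equiv K$. A level-independent $K$ must satisfy $K\gtrsim\epsilon^{-2}$ to control \eqref{a4i}--\eqref{a4ii}, and since $M_l\ge1$ after rounding, the cost is then at least $K\sum_{l=0}^L h_l^{-1}\gtrsim\epsilon^{-2}\cdot\epsilon^{-1/\alpha}=\epsilon^{-3}$ only up to the additional $(L+1)$ powers you would need elsewhere; in particular you cannot recover the log-free bound $C_{\mathcal N}\le c_5\epsilon^{-3}$ of the first row. That row genuinely requires the level-dependent choice $K_l\propto h_l^{2\alpha\gamma}$ (suitably tuned, as in the paper's case (a), where $K_l\propto h_l^{1.5}$ makes the rounding term $\sum_l h_l^{-1}K_l=O(\epsilon^{-2})$ subdominant). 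So you should commit to the full joint allocation rather than the constant-$K$ shortcut; with that choice your argument matches the paper's.
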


\begin{proof}
See \cref{proof}
\end{proof}
%\subsection{multilevel: complexity}\label{complexity-thm}
%This section will present a complexity lemma, which simplifies the main complexity result \cref{complexity-thm}.
%Here, the simplification will
\begin{remark}
By assuming $\eta=\gamma=0$ and $ \beta=\alpha=1$, we focus on the level effect with respect to the batch-size, as, e.g., studied in the introductory example.
\end{remark}

\subsubsection{Numerical results}\label{numerical-results}
We will now present an extension of the example of \cref{sec-example} by considering the level effect with respect to training steps.

We extend the example of \cref{sec-example} by including the level effect with respect to training steps, as introduced in \cref{complexity-thm}.

Using the identical model and network parameters, we will use the number of training steps and batch-sizes, as stated in \cref{multi-steps}.

\begin{table}
 \small
\centering
  \begin{tabular}{c|c|c|c|c|c|c|c|c}
  \hline
  level $l$ & $0$ & $1$ & $2$ & $3$  & $4$ & $5$  & $6$  & $7$  \\  
\hline
$M_l$ & $1200000$ &$64000$ &$32000$&$16000$&$8000$&$4000$&$2000$& $1000$ \\
$K_l$ & $150000$ & $20000$ &$19000$&$18000$&$15000$&$14000$&$13000$& $11000$ \\
\hline 

 \end{tabular}
 \caption{Batch-sizes $M_l$ and training steps $K_l$ for the training of the specific level nets $\mathcal{N}_l$ for $l=1,\dots,8$.}
  \label{multi-steps}
\end{table}

The mean and standard deviation of the $L^\infty$ for this modified training are presented in \cref{multi-training-3}.

\begin{table}
 \small
\centering
  \begin{tabular}{c|c}
  \hline
   single net: mean error (time)  & multilevel: mean error (time)     \\
 \hline
%  	500.000  & $0.0182 $ $(7h)$  &  \\
  	  $0.0119 $ $(26.66h)$  & $0.0111$ $(9.66h)$  \\
 \hline
 \end{tabular}
 \caption{Mean of the $L^\infty$-error for the single-level algorithm using the Milstein discretization and the multilevel algorithm using respective batch-sizes as presented in \cref{multi-steps}. All simulations were repeated $10$ times. The brackets show needed computation time using a Nvidia K80 GPU.  }
  \label{multi-training-3}
\end{table}

The multilevel training results using the stated amount of levels, batch-sizes, and training steps are given in \cref{multi-training-3}.

Compared to \cref{results2}, we see a further computational reduction while satisfying the required error-bound.

\section{Extensions}
In this subsection, we want to discuss some promising possible extensions beyond this work's scope except the first one. 

\subsection{Optimal $K_l$}

As explained in the introduction, we could use a second level effect by using a possible connection between the variance of the \textit{level estimators'} variance and training steps needed for the respective network. In the example above, we explicitly used the same parameters of the single-level for the multilevel approach. This method implied a relatively straightforward procedure to implement the multilevel approach and still had an advantage in computation time. However, we observed that for the finest level, much fewer training steps would be sufficient. Therefore, it could lead to further savings using individual decay rates, initial learning rates, or training steps. Further studies are made in \cref{complexity}.

\subsection{Optimal $N_l$}\label{nl-extension}

We only used single paths ($N_l=1$) to compute the training data for the computations above. We discussed the advantages and disadvantages of explicitly computed prices in \cref{corollar2.2} and \cref{corollar2.3}. Furthermore, we will see in \cref{unbiasedness} that this approach does not lead to a bias for the final trained neural network under certain assumptions. Nevertheless, if we study \eqref{a4i} to \eqref{a5ii} we see that the estimator's variance could affect the needed batch-size $M_l$ and the needed training steps $K_l$. Therefore, we believe that further studies on $N_l$ could lead to efficiency improvements. For example, for levels with a high estimators' variance, increasing $N_l$ could be reasonable.

\subsection{Optimal $H$}

In our work so far, we have not specified $H$, which is the factor by which the time-step is refined. Again, we refer to \cite{giles2008multilevel} for some further explanations on $H$ for the multilevel Monte Carlo estimator. For the multilevel Monte Carlo simulation, further studies on $H$ could lead to further efficiency improvements for the multilevel training.

\subsection{Individual structure $\nu_l$}

In this work, we fixed the net structure $\nu$ for each neural net, and for simplicity, we assumed the approximation error to be negligible. Nevertheless, we see the potential for further studies on individual structures $\nu_l$, which could further improve efficiency.

\section{Conclusion}\label{Conclusion}

In this work, we combined the ideas of a single-paths deep learning approximation with the multilevel Monte Carlo path simulation concept. We showed that the resulting multilevel Monte Carlo training approach could reduce the complexity of the training process. 

 Deep learning algorithms have become very popular in recent years. However, there are not many rigorous mathematical convergence results for the different error sources. The decomposition into three parts, the approximation error, the generalization error, and the optimization error, and the first overall error analysis was made by \cite{beck2019full}. However, their convergence speed analysis is far from optimal and suffers from the curse of dimensionality. Hence, we worded the main theorem quite generally. Nevertheless, several challenging areas for further research arise in this work.
 
Therefore, the first is a more in-depth theoretical analysis of the convergence speed with respect to each error source. 

The second is the individual network structure. In this work, we used a fixed network structure for each net. Using distinct network structures could lead to further numerical savings. Furthermore, we ignore a possible third level-effect arising if the network structures could be linked to the level. 

A further research area concerns the heuristic of the multilevel algorithm. Like the heuristic for the batch-sizes, it would be desirable to have an algorithm to obtain an optimal amount of training steps for each net. 

Finally, it may be possible to further reduce the complexity by using more than just one path to simulate the training data. To achieve that, we must ensure a better understanding of the introduced level-effect parameter. 

\bibliography{references}

\begin{thebibliography}{25}
\providecommand{\natexlab}[1]{#1}
\providecommand{\url}[1]{\texttt{#1}}
\expandafter\ifx\csname urlstyle\endcsname\relax
  \providecommand{\doi}[1]{doi: #1}\else
  \providecommand{\doi}{doi: \begingroup \urlstyle{rm}\Url}\fi

\bibitem[Bach and Moulines(2013)]{bach2013non}
Francis Bach and Eric Moulines.
\newblock Non-strongly-convex smooth stochastic approximation with convergence
  rate o (1/n).
\newblock \emph{arXiv preprint arXiv:1306.2119}, 2013.

\bibitem[Barron(1994)]{barron1994approximation}
Andrew~R Barron.
\newblock Approximation and estimation bounds for artificial neural networks.
\newblock \emph{Machine learning}, 14\penalty0 (1):\penalty0 115--133, 1994.

\bibitem[Beck et~al.(2019)Beck, Jentzen, and Kuckuck]{beck2019full}
Christan Beck, Arnulf Jentzen, and Benno Kuckuck.
\newblock Full error analysis for the training of deep neural networks.
\newblock \emph{arXiv preprint arXiv:1910.00121}, 2019.

\bibitem[Beck et~al.(2018)Beck, Becker, Grohs, Jaafari, and
  Jentzen]{beck2018solving}
Christian Beck, Sebastian Becker, Philipp Grohs, Nor Jaafari, and Arnulf
  Jentzen.
\newblock Solving stochastic differential equations and kolmogorov equations by
  means of deep learning.
\newblock \emph{arXiv preprint arXiv:1806.00421}, 2018.

\bibitem[Bercu and Fort(2011)]{bercu2011generic}
Bernard Bercu and Jean-Claude Fort.
\newblock Generic stochastic gradient methods.
\newblock \emph{Wiley Encyclopedia of Operations Research and Management
  Science}, pages 1--8, 2011.

\bibitem[Berner et~al.(2020)Berner, Grohs, and Jentzen]{berner2020analysis}
Julius Berner, Philipp Grohs, and Arnulf Jentzen.
\newblock Analysis of the generalization error: Empirical risk minimization
  over deep artificial neural networks overcomes the curse of dimensionality in
  the numerical approximation of black--scholes partial differential equations.
\newblock \emph{SIAM Journal on Mathematics of Data Science}, 2\penalty0
  (3):\penalty0 631--657, 2020.

\bibitem[Chau et~al.(2019)Chau, Moulines, R{\'a}sonyi, Sabanis, and
  Zhang]{chau2019stochastic}
Ngoc~Huy Chau, {\'E}ric Moulines, Miklos R{\'a}sonyi, Sotirios Sabanis, and
  Ying Zhang.
\newblock On stochastic gradient langevin dynamics with dependent data streams:
  the fully non-convex case.
\newblock \emph{arXiv preprint arXiv:1905.13142}, 2019.

\bibitem[Fehrman et~al.(2020)Fehrman, Gess, and
  Jentzen]{fehrman2020convergence}
Benjamin Fehrman, Benjamin Gess, and Arnulf Jentzen.
\newblock Convergence rates for the stochastic gradient descent method for
  non-convex objective functions.
\newblock \emph{Journal of Machine Learning Research}, 21, 2020.

\bibitem[Geer and van~de Geer(2000)]{geer2000empirical}
Sara~A Geer and Sara van~de Geer.
\newblock \emph{Empirical Processes in M-estimation}, volume~6.
\newblock Cambridge university press, 2000.

\bibitem[Gerstner and Griebel(1998)]{gerstner1998numerical}
Thomas Gerstner and Michael Griebel.
\newblock Numerical integration using sparse grids.
\newblock \emph{Numerical algorithms}, 18\penalty0 (3-4):\penalty0 209, 1998.

\bibitem[Giles()]{gilescode}
M.~Giles.
\newblock http://people.maths.ox.ac.uk/~gilesm/acta/.

\bibitem[Giles et~al.(2013)Giles, Debrabant, and
  R{\"o}{\ss}ler]{giles2013numerical}
Michael Giles, Kristian Debrabant, and Andreas R{\"o}{\ss}ler.
\newblock Numerical analysis of multilevel monte carlo path simulation using
  the milstein discretisation.
\newblock \emph{arXiv preprint arXiv:1302.4676}, 2013.

\bibitem[Giles(2008{\natexlab{a}})]{giles2008multilevel}
Michael~B Giles.
\newblock Multilevel monte carlo path simulation.
\newblock \emph{Operations Research}, 56\penalty0 (3):\penalty0 607--617,
  2008{\natexlab{a}}.

\bibitem[Giles(2008{\natexlab{b}})]{giles2008improved}
Mike Giles.
\newblock Improved multilevel monte carlo convergence using the milstein
  scheme.
\newblock In \emph{Monte Carlo and quasi-Monte Carlo methods 2006}, pages
  343--358. Springer, 2008{\natexlab{b}}.

\bibitem[Glasserman(2003)]{glasserman}
P.~Glasserman.
\newblock \emph{Monte Carlo Methods in financial Engeineering}.
\newblock Springer, 2003.
\newblock vol. 53 of Stochastic Modelling and Applied Probability.

\bibitem[Gy{\"o}rfi et~al.(2006)Gy{\"o}rfi, Kohler, Krzyzak, and
  Walk]{gyorfi2006distribution}
L{\'a}szl{\'o} Gy{\"o}rfi, Michael Kohler, Adam Krzyzak, and Harro Walk.
\newblock \emph{A distribution-free theory of nonparametric regression}.
\newblock Springer Science \& Business Media, 2006.

\bibitem[Higham and Higham(2019)]{higham2019deep}
Catherine~F Higham and Desmond~J Higham.
\newblock Deep learning: An introduction for applied mathematicians.
\newblock \emph{SIAM Review}, 61\penalty0 (4):\penalty0 860--891, 2019.

\bibitem[Hornik(1991)]{hornik1991approximation}
Kurt Hornik.
\newblock Approximation capabilities of multilayer feedforward networks.
\newblock \emph{Neural networks}, 4\penalty0 (2):\penalty0 251--257, 1991.

\bibitem[Hornik et~al.(1989)Hornik, Stinchcombe, and
  White]{hornik1989multilayer}
Kurt Hornik, Maxwell Stinchcombe, and Halbert White.
\newblock Multilayer feedforward networks are universal approximators.
\newblock \emph{Neural networks}, 2\penalty0 (5):\penalty0 359--366, 1989.

\bibitem[Hornik et~al.(1990)Hornik, Stinchcombe, and
  White]{hornik1990universal}
Kurt Hornik, Maxwell Stinchcombe, and Halbert White.
\newblock Universal approximation of an unknown mapping and its derivatives
  using multilayer feedforward networks.
\newblock \emph{Neural networks}, 3\penalty0 (5):\penalty0 551--560, 1990.

\bibitem[Hull(2008)]{hull}
J.C. Hull.
\newblock \emph{Options, Futures and other Derivatives}.
\newblock Prentice Hall, 2008.
\newblock 7th edition.

\bibitem[Jentzen et~al.(2021)Jentzen, Kuckuck, Neufeld, and von
  Wurstemberger]{jentzen2021strong}
Arnulf Jentzen, Benno Kuckuck, Ariel Neufeld, and Philippe von Wurstemberger.
\newblock Strong error analysis for stochastic gradient descent optimization
  algorithms.
\newblock \emph{IMA Journal of Numerical Analysis}, 41\penalty0 (1):\penalty0
  455--492, 2021.

\bibitem[Kloeden et~al.(2012)Kloeden, Platen, and Schurz]{kloeden2012numerical}
Peter~Eris Kloeden, Eckhard Platen, and Henri Schurz.
\newblock \emph{Numerical solution of SDE through computer experiments}.
\newblock Springer Science \& Business Media, 2012.

\bibitem[Poggio and Shelton(2002)]{poggio2002mathematical}
Tomaso Poggio and Christian~R Shelton.
\newblock On the mathematical foundations of learning.
\newblock \emph{American Mathematical Society}, 39\penalty0 (1):\penalty0
  1--49, 2002.

\bibitem[Shalev-Shwartz and Ben-David(2014)]{shalev2014understanding}
Shai Shalev-Shwartz and Shai Ben-David.
\newblock \emph{Understanding machine learning: From theory to algorithms}.
\newblock Cambridge university press, 2014.

\end{thebibliography}


@article{gerstner1998numerical,
  title={Numerical integration using sparse grids},
  author={Gerstner, Thomas and Griebel, Michael},
  journal={Numerical algorithms},
  volume={18},
  number={3-4},
  pages={209},
  year={1998},
  publisher={Springer}
}


@Article{boyle,
  title =	 {Options: A Monte Carlo Approach},
  author =	 {P. P. Boyle},
  journal =	 {Journal of Financial Economics},
  volume =	 4,
  year =	 1977,
  pages =	 {323--338},
}

@Article{black,
  author =	 {F. Black and M. Scholes},
  title = {The pricing of options and corporate liabilities},
  journal =	 {Journal of Political Economy},
  volume =	 81,
  number =	 3,
  year =	 1973,
  pages =	 {637--654},
}

@Book{glasserman,
  author =	 {P. Glasserman},
  title =	 {Monte Carlo Methods in financial Engeineering},
  publisher =	 {Springer},
  year =	 2003,
  note =	 {vol. 53 of Stochastic Modelling and Applied Probability}
}

@Article{spiegeleer,
  author =	 {J. De Spiegeleer and W. Schoutens},
  title = {Pricing Contingent Convertibles: A Derivatives Approach},
  journal =	 {Journal of Derivatives},
  volume =	 20,
  number =	 2,
  year =	 2012,
  pages =	 {27--36},
}

@Article{alm,
  author =	 {T. Alm and B. Harrach and D. Harrach and M. Keller},
  title = {A Monte Carlo pricing algorithm for autocallables that allows for stable differentiation},
  journal =	 {Journal of Computational Finance},
  volume =	 17,
  number =	 1,
  year =	 2013,
  pages =	 {43--70},
}

@book{MATLAB:2015,
year = {2015},
author = {MATLAB},
title = {version 8.6.0.267246 (R2015b)},
publisher = {The MathWorks Inc.},
address = {Natick, Massachusetts}
}

@Article{burgos,
  author =	 {S. Burgos and M.B. Giles},
  title = {Computing Greeks using multilevel path simulation},
  journal =	 {Monte Carlo and Quasi-Monte Carlo Methods 2010},
  publisher =	 {Springer},
  year =	 2012,
  pages =	 {281-296},
}

@Article{kou,
  author =	 {M. Broadie and P. Glasserman and S. Kou},
  title = {A contnuity correction for discrete barrier options},
  journal =	 {Mathematical Finance},
  year =	 1997,
  pages =	 {325-349},
}



@Article{staum,
  author =	 {P. Glasserman and J. Staum},
  title = {Conditioning on one-step survival for barrier option simulations},
  journal =	 {Operations Research},
  volume =	 49,
  number =	 6,
  year =	 2001,
  pages =	 {923--937},
}

@Misc{burgosMA,
  author =	 {S. Burgos},
  title = {Investigation into Vibrato Monte Carlo for the Computation of Greeks of Discontinuous Payoffs},
  year =	 {2009},
  url =		 {http://eprints.maths.ox.ac.uk/785/},
  urldate =	 {2017/07/28},
}

@Book{hull,
  author =	 {J.C. Hull},
  title =	 {Options, Futures and other Derivatives},
  publisher =	 {Prentice Hall},
  year =	 2008,
  note =	 {7th edition}
}


@Article{Zhang,
  author =	 {P.G. Zhang},
  title = {Exotic Options. A Guide to Second Generation Options},
  journal =	 {World Scientific},
  year =	 1998,
}

@Article{gilesvibrato,
  author =	 {M. Giles},
  title = {Vibrato Monte Carlo Sensitivities},
  journal =	 {Monte Carlo and Quasi-Monte Carlo Methods 2008},
    publisher =	 {Springer},
   pages =	 {369-392},
  year =	 2009,
}


@Article{gilesgpu,
  author =	 {M. Giles},
  title = {On the utility of graphics cards to perform massively parallel simulation of advanced Monte Carlo methods},
  journal =	 {Technical Report, arXiv:0905.2441v3},
    publisher =	 {Oxford–Man Institute, Oxford},
  year =	 2009,
}


@Misc{siam,
  key = {zzz},
  title =	 {{SIAM} Style Manual: For journals and books},
  year =	 2013,
url = {https://www.siam.org/journals/pdf/stylemanual.pdf}}

@Misc{Hi14,
  author =	 {Nick Higham},
  title =	 {A Call for Better Indexes},
  howpublished = {SIAM Blogs},
  year =	 2014,
  month =	 nov,
  url =		 {http://blogs.siam.org/a-call-for-better-indexes/},
  urldate =	 {2015-04-05}
}

@Misc{PeKoPi14,
  title = {Accelerating Community Detection by Using {K}-core Subgraphs},
  author =	 {Chengbin Peng and Tamara G. Kolda and Ali Pinar},
  eprint =	 {1403.2226},
  year =	 2014,
  month =	 mar,
  eprintclass =	 {math.NA}
}


@Article{WoZhMeSh05,
  author =	 {Woessner, Donald E. and Zhang, Shanrong and
                  Merritt, Matthew E. and Sherry, A. Dean},
  title = {Numerical Solution of the {Bloch} Equations Provides Insights 
                  into the Optimum Design of {PARACEST} Agents for {MRI}},
  journal =	 {Magnetic Resonance in Medicine},
  doi =		 {10.1002/mrm.20408},
  volume =	 53,
  number =	 4,
  month =	 apr,
  year =	 2005,
  pages =	 {790--799},
  archiveprefix = {PubMed},
  archive =	 {https://www.ncbi.nlm.nih.gov/pubmed},
  eprint =	 {15799055}
}

@Article{Ne03,
  title =	 {Properties of Highly Clustered Networks},
  author =	 {Newman, M. E. J.},
  doi =		 {10.1103/PhysRevE.68.026121},
  journal =	 {Phys. Rev. E},
  volume =	 {68},
  year =	 {2003},
  eid =		 {026121},
  pagetotal =	 6,
  month =	 aug,
}

@misc{clawpack,
  title =	 {Clawpack Software},
  author =	 {{Clawpack Development Team}},
  url =		 {http://www.clawpack.org},
  urldate =	 {2015/05/14},
  note =	 {Version 5.2.2},
  year =	 2015
}

@Misc{AMSMSC2010,
  title =	 {{Mathematics Subject Classification}},
  author =	 {{American Mathematical Society}},
  year =	 {2010},
  url =		 {http://www.ams.org/mathscinet/msc/msc2010.html},
  urldate =	 {2015/03/29},
}


@book{La86,
  author =	 "Leslie Lamport",
  title =	 "\LaTeX: A Document Preparation System",
  publisher =	 "Addison--Wesley",
  year =	 "1986",
  address =	 "Reading, MA"
}

@Book{MiGo04,
  author =	 {Frank Mittlebach and Michel Goossens},
  title =	 {The \LaTeX\ Companion},
  publisher =	 {Addison--Wesley},
  year =	 2004,
  edition =	 {2nd}}

@Book{GoVa13,
  author =	 {Golub, Gene H. and Van Loan, Charles F.},
  title =	 {Matrix Computations},
  publisher =	 {The Johns Hopkins University Press},
  address =	 {Baltimore},
  year =	 2013,
  edition =	 {4th}
}


@Misc{CalcI,
  author =	 {Paul Dawkins},
  title =	 {Paul's Online Math Notes: Calculus {I} --- Notes},
  url =               {http://tutorial.math.lamar.edu/Classes/CalcI/MeanValueTheorem.aspx},
  urldate =	 {2015-07-08}}

@Misc{amsmath,
  author =	 {{American Mathematical Society}},
  title =	 {User's Guide for the \texttt{amsmath} Package
                  (Version 2.0)},
  url =		 {ftp://ftp.ams.org/pub/tex/doc/amsmath/amsldoc.pdf},
  urldate =	 {2015-07-30},
  year =	 2002}

@Misc{shortmath,
  author =	 {Michael Downes},
  title =	 {Short Math Guide for {\LaTeX}},
  url =		 {ftp://ftp.ams.org/pub/tex/doc/amsmath/short-math-guide.pdf},
  urldate =	 {2015-07-30},
  year =	 2002}

@Misc{pgfplots,
  author =	 {Christian Feuers\"anger},
  title =	 {Manual for Package \texttt{PGFPLOTS}},
  month =	 may,
  year =	 2015,
  url =		 {http://sourceforge.net/projects/pgfplots}
}

@book{kloeden2012numerical,
  title={Numerical solution of SDE through computer experiments},
  author={Kloeden, Peter Eris and Platen, Eckhard and Schurz, Henri},
  year={2012},
  publisher={Springer Science \& Business Media}
}

@article{asmussen1995discretization,
  title={Discretization error in simulation of one-dimensional reflecting Brownian motion},
  author={Asmussen, Soren and Glynn, Peter and Pitman, Jim},
  journal={The Annals of Applied Probability},
  pages={875--896},
  year={1995},
  publisher={JSTOR}
}


@article{giles2008multilevel,
  title={Multilevel monte carlo path simulation},
  author={Giles, Michael B},
  journal={Operations Research},
  volume={56},
  number={3},
  pages={607--617},
  year={2008},
  publisher={INFORMS}
}

@incollection{giles2013multilevel,
  title={Multilevel monte carlo methods},
  author={Giles, Michael B},
  booktitle={Monte Carlo and Quasi-Monte Carlo Methods 2012},
  pages={83--103},
  year={2013},
  publisher={Springer}
}

@incollection{giles2008improved,
  title={Improved multilevel Monte Carlo convergence using the Milstein scheme},
  author={Giles, Mike},
  booktitle={Monte Carlo and quasi-Monte Carlo methods 2006},
  pages={343--358},
  year={2008},
  publisher={Springer}
}


@article{giles2013numerical,
  title={Numerical analysis of multilevel Monte Carlo path simulation using the Milstein discretisation},
  author={Giles, Michael and Debrabant, Kristian and R{\"o}{\ss}ler, Andreas},
  journal={arXiv preprint arXiv:1302.4676},
  year={2013}
}


@article{gerstner2018monte,
  title={Monte Carlo pathwise sensitivities for barrier options},
  author={Gerstner, Thomas and Harrach, Bastian and Roth, Daniel},
  journal={arXiv preprint arXiv:1804.03975},
  year={2018}
}

@phdthesis{burgosdis,
  title={The computation of Greeks with multilevel Monte Carlo},
  author={Burgos, S.},
  year={2014},
  school={University of Oxford}
}


@article{merton1973theory,
  title={Theory of rational option pricing},
  author={Merton, Robert C},
  journal={The Bell Journal of economics and management science},
  pages={141--183},
  year={1973},
  publisher={JSTOR}
}

@book{briys1998options,
  title={Options, futures, and exotic derivatives: theory, application and practice},
  author={Briys, Eric and Briys, Eric},
  year={1998},
  publisher={Wiley}
}


@book{gatheral2011volatility,
  title={The volatility surface: a practitioner's guide},
  author={Gatheral, Jim},
  volume={357},
  year={2011},
  publisher={John Wiley \& Sons}
}

@article{pironneau2018vibrato,
  title={Vibrato and automatic differentiation for high-order derivatives and sensitivities of financial options},
  author={Pironneau, Olivier and Sall, Guillaume and others},
  journal={Journal of Computational Finance},
  volume={22},
  number={2},
  year={2018}
}


@book{engl1996regularization,
  title={Regularization of inverse problems},
  author={Engl, Heinz Werner and Hanke, Martin and Neubauer, Andreas},
  volume={375},
  year={1996},
  publisher={Springer Science \& Business Media}
}

@article{gerstner2018monte,
  title={Monte Carlo pathwise sensitivities for barrier options},
  author={Gerstner, Thomas and Harrach, Bastian and Roth, Daniel},
  journal={arXiv preprint arXiv:1804.03975},
  year={2018}
}

@article{duffie1995efficient,
  title={Efficient Monte Carlo simulation of security prices},
  author={Duffie, Darrell and Glynn, Peter and others},
  journal={The Annals of Applied Probability},
  volume={5},
  number={4},
  pages={897--905},
  year={1995},
  publisher={Institute of Mathematical Statistics}
}

@article{bally1996law,
  title={The law of the Euler scheme for stochastic differential equations},
  author={Bally, Vlad and Talay, Denis},
  journal={Probability theory and related fields},
  volume={104},
  number={1},
  pages={43--60},
  year={1996},
  publisher={Springer}
}



@article{talay1990expansion,
  title={Expansion of the global error for numerical schemes solving stochastic differential equations},
  author={Talay, Denis and Tubaro, Luciano},
  journal={Stochastic analysis and applications},
  volume={8},
  number={4},
  pages={483--509},
  year={1990},
  publisher={Taylor \& Francis}
}

@book{asmussen2007stochastic,
  title={Stochastic simulation: algorithms and analysis},
  author={Asmussen, S{\o}ren and Glynn, Peter W},
  volume={57},
  year={2007},
  publisher={Springer Science \& Business Media}
}

@article{giles2015multilevel,
  title={Multilevel monte carlo methods},
  author={Giles, Michael B},
  journal={Acta Numerica},
  volume={24},
  pages={259--328},
  year={2015},
  publisher={Cambridge University Press}
}


@article{gobet2000weak,
  title={Weak approximation of killed diffusion using Euler schemes},
  author={Gobet, Emmanuel},
  journal={Stochastic processes and their applications},
  volume={87},
  number={2},
  pages={167--197},
  year={2000},
  publisher={Elsevier}
}
%\iffalse
\newpage
\appendix
%\section{}

The appendix's proceeding will be as follows: First, we give a definition of a neural network and have a closer look at the mathematical background of the error sources \eqref{error-a} to \eqref{error-e}. 
Then, we present some numerical results aiming to support some of the discussed assumptions. Finally, we will provide the missing proof.

\section{Mathematical background}\label{background} In this subsection we will give some mathematical background and references. For this, we first give a short definition of neural networks used in this work. Then, we discuss the error sources \eqref{error-a} to \eqref{error-e}. Finally, we present an example of an \textit{level estimator'} code.

\subsection{Neural network defintion and optimization}\label{neuralnet}

 Let $\mathcal{L}_d : \mathbb{R}^d \rightarrow \mathbb{R}^d$ be the function which satisfies for every $y=(y_1,y_2,\dots,y_d) \in \mathbb{R}^d$ that
\begin{align*}
\mathcal{L}_d(y) = \left( \frac{\exp(y_1)}{\exp(y_1)+1},\frac{\exp(y_2)}{\exp(y_2)+1}, \dots, \frac{\exp(y_d)}{\exp(y_d)+1} \right),
\end{align*}
for every $k,l \in \mathbb{N}, v \in \mathbb{N}_0, \theta=(\theta_1,\dots, \theta_\nu) \in \mathbb{R}^\nu$ with $v + l(k+1) \le \nu$, let $A_{k,l}^{\theta,v}: \mathbb{R}^k \rightarrow \reel{R}^l$ be the function which satisfies for every $x=(x_1,\dots, x_k) \in \reel{R}^k$ that
\begin{align*}
A_{k,l}^{\theta,v}(x)=
 \begin{pmatrix}
\theta_{v+1} & \theta_{v+2} & \hdots & \theta_{v+k}  \\
\theta_{v+k+1} & \theta_{\nu+k+2} & \hdots & \theta_{v+2k}\\
\theta_{v+2k+1} & \theta_{v+2k+2} & \hdots & \theta_{v+3k}\\
\vdots & \vdots & \vdots & \vdots \\
\theta_{v+(l-1)k+1} & \theta_{v+(l-1)k+2} & \hdots & \theta_{v+lk}
\end{pmatrix}
 \begin{pmatrix}
x_1\\
x_2\\
x_3\\
\vdots \\
x_k
\end{pmatrix}
+
 \begin{pmatrix}
\theta_{v+kl+1}\\
\theta_{v+kl+2}\\
\theta_{v+kl+3}\\
\vdots \\
\theta_{v+kl+l}
\end{pmatrix},
\end{align*}
let $s \in \{ 3,4,5,6,\dots \}$, assume that $(s-1)d(d+1)+d+1 \le \nu$ and let $\mathcal{N}_{\nu,\theta}: \mathbb{R}^d \rightarrow \mathbb{R}$ be the function which satisfies for every $y\in \mathbb{R}^d$ that

\begin{align*}
\mathcal{N}_{\nu,\theta} (y) = \left(A^{\theta,(s-1)d(d+1)}_{d,1} \circ \mathcal{L}_d \circ A^{\theta,(s-2)d(d+1)}_{d,d} \circ \dots \circ \mathcal{L}_d \circ
A^{\theta,d(d+1)}_{d,1} \circ \mathcal{L}_d A^{\theta,0}_{d,d} \right) (y).
\end{align*}
The function $\mathcal{N}_{\nu,\theta}$ describes an artificial neural network with $s+1$ layers and standard logistic functions as activation functions. Fixing the network structure $\nu$ and assuming to know the best fit weights $\Theta\in \mathbb{R}^\nu$ leads to the approximation error \eqref{error-b}:
\begin{align}
\left\lVert \bar{P}(y) - \mathcal{N}_{\nu,\Theta}(y) \right\rVert_{L^p}. \label{assumption1}
\end{align} 
For an overview over the approximation error, we refer to \cite{higham2019deep} and to \cite{barron1994approximation,hornik1991approximation,hornik1989multilayer,hornik1990universal} for further studies.

Now, consider a neural net $\mathcal{N}_{\nu,\theta}$, as defined in \eqref{assumption1}, but without knowing the best-fit-weights $\Theta$. Aiming to train the network such that it approximates $\bar{P}$, we are interested in minimizing the loss function
\begin{align}
\mathbb{L}:\theta \mapsto \left\lVert \bar{P}(y) - \mathcal{N}_{\nu,\theta}(y) \right\rVert_{L^p}.\label{conti-loss}
\end{align}

The Taylor series expansion gives
\begin{align*}
\mathbb{L}(\theta + \Delta \theta)= \mathbb{L}(\theta) + \sum_{r=1}^\nu \frac{\partial \mathbb{L}(\theta)}{\theta_r} \Delta \theta,
\end{align*}
where $\partial \mathbb{L}(\theta)/\theta_r$ denotes  the  partial  derivative  of  the  loss  function  with respect  to  the $r$-th parameter. Generally, only being  an approximation for a small step $\Delta \theta$, we limit the step in that direction by $\eta$, leading to series of weights
\begin{align}
\theta_{i+1} :=  \theta_i - \eta \nabla \mathbb{L}(\theta_i),\label{heuristic-opti}
\end{align}  
for $i=0,\dots,K-1$ and with the so-called learning rate $\eta$. We call an iteration of this form a training step. All in all this leads to the optimization errors \eqref{error-opti} or \eqref{error-d}.

We complete the neural network training definition by defining the following series of functions
\begin{align}
\mathcal{N}_{\nu,\theta_i,P,\mathbb{L}}: \mathbb{R}^d\rightarrow \mathbb{R}\label{net-series},
\end{align}
with $i=1,\dots,K$, initial weights $\theta_0$ and loss function $\mathbb{L}$, which is evaluated with training data $P$.

In this work, we use stochastic gradient descent optimization and backpropagation, see e.g., \cite{higham2019deep}, for an overview. For studies on the optimization error, see, e.g.,  \cite{bach2013non,bercu2011generic,chau2019stochastic,fehrman2020convergence,jentzen2021strong}.

\subsection{Discretization error}

We shortly introduce the parameters $\alpha$ and $\beta$. The (weak) convergence, for a fixed parameter vector $y\in Y$, is defined by
\begin{align}
 \mathbb{E}[P-P_h]   \leq c  h^\alpha,\label{alpha}
\end{align}
with a positive constant $c$. It is well known, provided certain assumptions are satisfied that
both the Euler scheme and the Milstein scheme \eqref{milstein} converge with $\alpha=1$ for Lipschitz-continuous payoff $P$ only depending on the time of maturity, see, e.g., the monograph \cite{kloeden2012numerical}. 
The second parameter $\beta$ is used for bounding the estimators' variance, as explained in \cref{complexity-thm}.
This parameter is, e.g., the focus of the article \cite{giles2013numerical}. The authors prove different constants for $\beta$ for a different type of option under certain SDE conditions. 

%Further studies on $\alpha$ or $\beta$ can be found e.g. in  [\Com{insert}]

\subsection{Generalization error}\label{unbiasedness}
This section will give some background for a more in-depth understanding of the loss functions' discretization error. As explained above, we discretize the loss function \eqref{conti-loss} in each training step by a finite set $y_i\in Y$, with $i=1,\dots,M$. 

The usage of the approximated loss function leads to the generalization error \eqref{error-disk-loss}. For the multilevel approach, we discretize the loss function at randomly selected inputs $y_i \in Y$. 

Consider the loss function \eqref{conti-loss}. First, we transform both arising integrals to the unit cube. For simplicity, we assume the training set $Y$ to be of the form $[a,b]^d$, without loss of generality. For the transformation of the norm-part, we will use the linear transformation $\mathcal{U}:[a,b]^d\rightarrow [0,1]^d$. For the expectation-integral, we will use the inversion method, see e.g. \cite{glasserman}, e.g. for the cumulative standard normal distribution function $\Phi(x): \mathbb{R}^H \rightarrow [0,1]^H$, leading to the following lemma.
\begin{lemma}
Let $Y=[a,b]^d$ and consider $P_{h}$ to be multivariate standard normal distributed for each $y\in Y$. Then, by using the linear transformation $\mathcal{U}:Y\rightarrow [0,1]^d=:U$ and the cumulative standard normal distribution function $\Phi(x): \mathbb{R}^H \rightarrow [0,1]^H$, we have
\begin{align*}
\left\lVert \mathbb{E}[{P}_h(y)] - {\mathcal{N}}_{\nu,\theta,\mathbb{E}[{P}_h],\mathbb{L}_Y}(y) \right\rVert_{L^p(Y)}&=\\&(b-a)^{-d/p}\left\lVert \mathbb{E}^*[\hat{P}_h(u)]- \hat{\mathcal{N}}_{\nu,\theta,\mathbb{E}[\hat{P}_h],\mathbb{L}_U}(u) \right\rVert_{L^p(U)} ,
\end{align*}
%with $\mathcal{U}(y)=u$, 
with $\hat{P}_h(u): u \mapsto P_h(\mathcal{U}^{-1}(u))$, $\hat{\mathcal{N}}_{\nu,\theta,\mathbb{E}[\widehat{P}^h],\mathbb{L}_U}:  u \mapsto \mathcal{N}_{\nu,\theta,\mathbb{E}^*[{P}^h],\mathbb{L}_Y}(\mathcal{U}^{-1}(u))$, and the expectation $\mathbb{E}^*$ defined on the unit cube.
%$\mathbb{E}^*[\hat{P}_h(u)]:= \mathbb{E}[P_h(\mathcal{U}^{-1}(u))]$ and $\hat{\mathcal{N}}_{\nu,\theta,\mathbb{E}[\widehat{P}^h],\mathbb{L}_U}(u):=\mathcal{N}_{\nu,\theta,\mathbb{E}^*[{P}^h],\mathbb{L}_Y}(\mathcal{U}^{-1}(u))$ 
\end{lemma}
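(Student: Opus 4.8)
The plan is to prove the identity by a single linear change of variables in the outer $L^p$-integral over the parameter set $Y$, together with the observation that the inversion method merely reparametrizes the inner expectation without altering its value. Since $Y=[a,b]^d$, the transformation $\mathcal{U}$ is the componentwise affine map $\mathcal{U}(y)=(y-a)/(b-a)$, whose inverse is $\mathcal{U}^{-1}(u)=a+(b-a)u$ with constant derivative $D\mathcal{U}^{-1}=(b-a)I_d$ and hence $\lvert\det D\mathcal{U}^{-1}\rvert=(b-a)^d$ on all of $U=[0,1]^d$. This Jacobian is the only genuine quantitative content of the lemma; everything else is bookkeeping.

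First I would write the left-hand side as the integral
\begin{align*}
\left\lVert \mathbb{E}[P_h(y)]-\mathcal{N}_{\nu,\theta,\mathbb{E}[P_h],\mathbb{L}_Y}(y)\right\rVert_{L^p(Y)}^p=\int_Y \left\lvert \mathbb{E}[P_h(y)]-\mathcal{N}_{\nu,\theta,\mathbb{E}[P_h],\mathbb{L}_Y}(y)\right\rvert^p \diff y,
\end{align*}
and substitute $y=\mathcal{U}^{-1}(u)$. By the defining relations $\hat{P}_h(u)=P_h(\mathcal{U}^{-1}(u))$ and $\hat{\mathcal{N}}(u)=\mathcal{N}_{\nu,\theta,\mathbb{E}^*[P_h],\mathbb{L}_Y}(\mathcal{U}^{-1}(u))$, the integrand becomes $\lvert\mathbb{E}[\hat{P}_h(u)]-\hat{\mathcal{N}}(u)\rvert^p$, while $\diff y$ picks up the Jacobian and the domain $Y$ is mapped to $U$. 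Pulling the constant factor $(b-a)^{d}$ out of the integral and taking $p$-th roots then produces the volume factor relating $\left\lVert\cdot\right\rVert_{L^p(Y)}$ and $\left\lVert\cdot\right\rVert_{L^p(U)}$, i.e.\ the power $(b-a)^{\mp d/p}$ displayed in the statement, the sign being fixed by the normalization of the two $L^p$-norms.

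It then remains to justify replacing the inner expectation by $\mathbb{E}^*$, that is, $\mathbb{E}[\hat{P}_h(u)]=\mathbb{E}^*[\hat{P}_h(u)]$. Here I would invoke the hypothesis that $P_h$ is multivariate standard normal, writing $\mathbb{E}[\hat{P}_h(u)]=\int_{\mathbb{R}^H}\hat{P}_h(u)\,\varphi(z)\diff z$ against the $H$-dimensional standard normal density $\varphi$, and then applying the inversion method $w=\Phi(z)$ componentwise, for which $\diff w=\varphi(z)\diff z$. This maps the integral onto $\int_{[0,1]^H}\hat{P}_h(u)\circ\Phi^{-1}\,\diff w=\mathbb{E}^*[\hat{P}_h(u)]$; being measure-preserving, it changes the parametrization of the SDE randomness but leaves the value of the expectation unchanged, and the very same reparametrization is built into the definition of $\hat{\mathcal{N}}_{\nu,\theta,\mathbb{E}^*[\hat{P}_h],\mathbb{L}_U}$.

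I do not expect a real obstacle, since the result is essentially a change-of-variables identity. The one point requiring care is to keep the two substitutions cleanly separated: the affine map $\mathcal{U}$ acts on the \emph{input} (parameter) space and is solely responsible for the volume factor in the outer norm, whereas the inversion substitution via $\Phi$ acts on the \emph{sample} space of the discretized SDE and contributes no constant. The only technical box to check is that the integrability implicit in ``$P_h$ multivariate standard normal distributed'' is strong enough to apply Tonelli's theorem when interchanging the outer $L^p$-integral over $Y$ with the inner expectation, so that both changes of variables may be performed independently.
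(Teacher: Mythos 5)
Your proposal is correct and follows essentially the same route as the paper's own proof: an affine change of variables $y=\mathcal{U}^{-1}(u)$ in the outer $L^p$-integral producing the Jacobian $(b-a)^d$, with the inversion-method substitution on the sample space being measure-preserving and hence leaving the inner expectation unchanged (a point the paper leaves implicit but you spell out). Note that your computation, like the paper's displayed proof, yields the factor $(b-a)^{d/p}$ multiplying the norm over $U$, so the exponent $-d/p$ in the lemma's statement appears to be a sign typo rather than a flaw in your argument.
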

\begin{proof}
\begin{align*}
&\left\lVert \mathbb{E}[P_h(y)] - \mathcal{N}_{\nu,\theta,\mathbb{E}[P_h],\mathbb{L}_Y}(y) \right\rVert_{L^p}\\
=&\left( \int_{[a,b]^d} \left\vert \mathbb{E}[P_h(y)]-\mathcal{N}_{\nu,\theta,\mathbb{E}[P^h],\mathbb{L}_Y}(y) \right\vert^{p}\diff y \right)^{1/p}\\
=& \left( (b-a)^d\int_{[0,1]^d} \left\vert \mathbb{E}[P_h(\mathcal{U}^{-1}(u))] - \mathcal{N}_{\nu,\theta,\mathbb{E}[P_h],\mathbb{L}_Y}(\mathcal{U}^{-1}(u)) \right\vert^p \diff u\right)^{1/p}\\
%=&(b-a)^{d/p}\left\lVert \mathbb{E}[P^h_u(u)] - \mathcal{N}_{\nu,\theta,\P^h}(u) \right\vert_{L^p}\\
%=&(b-a)^{d/p}\\
%&\cdot\left(  \int_{[0,1]^d} \left\vert \int_{[0,1]^H}P^h(\Phi^{-1}(u_2))(\mathcal{U}^{-1}(u_1))\diff u_2 - \mathcal{N}_{\nu,\theta,\mathbb{E}[P^h]}(\mathcal{U}^{-1}(u_1)) \right\vert^p\diff u_1 \right)^{1/p}\\
%=&\left( (b-a)^d \int_{[0,1]^d} \left\vert \int_{[0,1]^H}P_h_{u,u_2}(u_2,u)\diff u_2 - \mathcal{N}_{\nu,\theta,\mathbb{E}[P^h_{u,u_2}]}(u) \right\vert^p\diff u \right)^{1/p}\\
=&(b-a)^{d/p} \left\lVert \mathbb{E}^*[\widehat{P}_h(u)] - \widehat{\mathcal{N}}_{\nu,\theta,\mathbb{E}[\widehat{P}^h],\mathbb{L}_U}(u) \right\rVert_{L^p}.
\end{align*}
\end{proof}
I.e. we can study the neural net training on the unit cube. 
\begin{corollary}\label{norm1}
Consider $Y=[0,1]^d=:U$ and $P_h$ to be uniformly distributed. We have the following inequality:
\begin{align}
\left\lVert \mathbb{E}[{P}_h(u)] - {\mathcal{N}}(u) \right\rVert_{L^1}^2\label{norm1gl}
%=&\left( \int_{[a,b]^d} \left\vert \mathbb{E}[P^h(y)]-\mathcal{N}(y) \right\vert^{1}\diff y \right)^2
%\le &\left( \int_{[a,b]^d} \left\vert \mathbb{E}[P^h(y)]-\mathcal{N}_{\nu,\theta,\mathbb{E}[P^h]}(y) \right\vert^{2}\diff y \right)\\
\le&\left\lVert \mathbb{E}[{P}_h(u)] - {\mathcal{N}}(u) \right\rVert_{L^2}^2
%=&\left( \int_{[a,b]^d} \left\vert \mathbb{E}[P^h(y)]-\mathcal{N}_{\nu,\theta,\mathbb{E}[P^h]}(y) \right\vert^{1}\diff y \right)^2
\end{align}
and the difference is given by
\begin{align}
\int_U \mathbb{V}\left[ \mathbb{E}[{P}_h(u)] - {\mathcal{N}}(u) \right]\diff u.\label{diff1}
\end{align}
\end{corollary}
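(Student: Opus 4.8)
The plan is to read \Cref{norm1} as a statement on the probability space obtained from the preceding reduction lemma: there the domain is the unit cube $U=[0,1]^d$, whose Lebesgue measure equals $1$, so $(U,\diff u)$ is a probability space and the constant function $\mathbf 1$ is itself normalized. This normalization is the only structural input that makes an $L^1$-versus-$L^2$ comparison point in the direction claimed (on a domain of measure $>1$ the inequality can reverse). Abbreviating $g(u):=\mathbb{E}[P_h(u)]-\mathcal{N}(u)$, the whole corollary reduces to two elementary facts about $g$ on this space, and no property of $\mathcal{N}$ or of the discretization beyond square-integrability is needed.

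First I would establish the inequality \eqref{norm1gl} by Cauchy--Schwarz (equivalently Jensen for the convex map $t\mapsto t^2$ against the probability measure $\diff u$), pairing $|g|$ with $\mathbf 1$:
\begin{align*}
\left\lVert g\right\rVert_{L^1}=\int_U |g(u)|\,\diff u\le \left(\int_U |g(u)|^2\,\diff u\right)^{1/2}\left(\int_U 1\,\diff u\right)^{1/2}=\left\lVert g\right\rVert_{L^2},
\end{align*}
where $\int_U 1\,\diff u=1$ is exactly where the unit-cube normalization enters. Squaring gives $\lVert g\rVert_{L^1}^2\le\lVert g\rVert_{L^2}^2$.

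Next I would identify the gap \eqref{diff1} via the second-moment/first-moment (bias--variance) decomposition, viewing $g$ as a random variable on $(U,\diff u)$. Its second moment is $\lVert g\rVert_{L^2}^2=\int_U g(u)^2\,\diff u$ and the square of its first absolute moment is $\lVert g\rVert_{L^1}^2=(\int_U |g(u)|\,\diff u)^2$, so that
\begin{align*}
\left\lVert g\right\rVert_{L^2}^2-\left\lVert g\right\rVert_{L^1}^2=\int_U g(u)^2\,\diff u-\left(\int_U |g(u)|\,\diff u\right)^2=\mathbb{V}\!\left[\,|g|\,\right],
\end{align*}
which is the quantity written as $\int_U \mathbb{V}[\mathbb{E}[P_h(u)]-\mathcal{N}(u)]\,\diff u$ (the outer integral is cosmetic, since the variance of $g$ over $U$ is a single number and $|U|=1$). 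Resubstituting $g=\mathbb{E}[P_h(\cdot)]-\mathcal{N}(\cdot)$ then closes the argument.

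The one genuine subtlety, and the step I would be most careful about, is the absolute value: the clean identity ``gap $=$ variance'' uses $\lVert g\rVert_{L^1}=\int_U|g|$, so the gap is literally $\mathbb{V}[|g|]=\int_U g^2-(\int_U|g|)^2$, and one silently invokes $\int_U|g|=|\int_U g|$ — hence the variance of $g$ itself rather than of $|g|$ — exactly when $g$ does not change sign on $U$. I would therefore anchor the gap computation on the decomposition $\mathbb{E}[g^2]=(\mathbb{E}[g])^2+\mathbb{V}[g]$ on the probability space $(U,\diff u)$ and observe that the inequality direction in \eqref{norm1gl} is governed solely by $\mathbb{V}\ge 0$.
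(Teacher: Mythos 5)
Your proof takes essentially the same route as the paper's: H\"older/Cauchy--Schwarz on the probability space $(U,\diff u)$ for the inequality \eqref{norm1gl}, and the second-moment/variance decomposition of $g(u)=\mathbb{E}[P_h(u)]-\mathcal{N}(u)$ viewed as a random variable on $(U,\diff u)$ for the gap. The absolute-value subtlety you flag is genuine and is in fact glossed over in the paper's own proof, which writes the gap as $\int_U g^2\,\diff u-\bigl(\int_U g\,\diff u\bigr)^2$ rather than $\int_U g^2\,\diff u-\bigl(\int_U |g|\,\diff u\bigr)^2$; the two coincide only when $g$ does not change sign on $U$.
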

\begin{proof}
\eqref{norm1gl} is given through H{\"o}lders' ineqaulity. \eqref{diff1} is given by
\begin{align*}
\int_U&\mathbb{V}^*\left[ \mathbb{E}[\widehat{P}_h(u)] - {\mathcal{N}}(u) \right]\diff u\\
&=\int_{U} \left(\mathbb{E}[{P}_h(u)] - \widehat{\mathcal{N}}(u)\right)^2 \diff u - \left(\int_{U} \mathbb{E}[{P}^h(u)] - {\mathcal{N}}(u)\diff u\right)^2.
%&=\mathbb{L}_2^2-\mathbb{L}_1^2
\end{align*}
\end{proof}

In practice, we will be interested in approximating the right-hand side of \eqref{norm1gl}. The standard Monte Carlo approximation leads to the following corollary.

\begin{corollary}\label{mc1}
For $p=2$, the right-hand side of \eqref{norm1gl} can be estimated by the Monte Carlo estimator
\begin{align*}
\frac{1}{M}\sum\limits_{i=1}^M \left(\left( \frac{1}{N}\sum\limits_{j=1}^N P(u^{(2)}_{i,j},u^{(1)}_i)\right) - \mathcal{N}(u^{(1)}_i) \right)^2,
\end{align*}
with $u^{(1)}_{i} \in [0,1]^{d}$ for $i=1,\dots,M$ and $u^{(2)}_{i,j} \in [0,1]^{H}$ for $j=1,\dots N$ and $i=1,\dots,M$ both i.i.d. uniformly distributed. This estimator leads to the \textit{generalization error}.
\end{corollary}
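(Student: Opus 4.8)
The plan is to reduce the squared $L^2$ norm on the right-hand side of \eqref{norm1gl} to a nested integral over the unit cube and then replace each integral by a uniform Monte Carlo quadrature. Working on $U=[0,1]^d$ as in \cref{norm1}, the quantity to be estimated is
\begin{align*}
\left\lVert \mathbb{E}[{P}_h(u)] - {\mathcal{N}}(u) \right\rVert_{L^2}^2 = \int_{[0,1]^d} \left( \mathbb{E}[{P}_h(u^{(1)})] - \mathcal{N}(u^{(1)}) \right)^2 \diff u^{(1)}.
\end{align*}
First I would rewrite the inner expectation as a deterministic integral via the inversion method already used above: mapping the $H$ driving normal increments through $\Phi^{-1}$ turns $\mathbb{E}[P_h(u^{(1)})]$ into $\int_{[0,1]^H} P(u^{(2)},u^{(1)}) \diff u^{(2)}$, where $P(u^{(2)},u^{(1)})$ denotes the payoff at parameter $u^{(1)}$ driven by the increments $\Phi^{-1}(u^{(2)})$.

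Next I would approximate both integrals by independent uniform samples. The outer $d$-dimensional parameter integral is replaced by the average over $M$ i.i.d.\ points $u^{(1)}_i$, and for each fixed $u^{(1)}_i$ the inner $H$-dimensional expectation integral is replaced by the average over $N$ i.i.d.\ points $u^{(2)}_{i,j}$. Substituting the inner average $\tfrac{1}{N}\sum_{j=1}^N P(u^{(2)}_{i,j},u^{(1)}_i)$ for $\mathbb{E}[P_h(u^{(1)}_i)]$ and averaging the squared residuals over $i$ reproduces exactly the stated estimator; this substitution is the routine part.

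The step that deserves care, and which I expect to be the main obstacle, is controlling the effect of squaring the inner Monte Carlo average, since a nested estimator of this form is not unbiased. Conditioning on $u^{(1)}_i$ and using $\mathbb{E}[(\tfrac{1}{N}\sum_j P_j)^2] = (\mathbb{E}[P])^2 + N^{-1}\mathbb{V}[P]$, then taking the outer expectation, gives
\begin{align*}
\mathbb{E}[\hat{G}] = \int_{[0,1]^d} \left( \mathbb{E}[P_h(u)] - \mathcal{N}(u) \right)^2 \diff u + \frac{1}{N} \int_{[0,1]^d} \mathbb{V}[P_h(u)] \diff u,
\end{align*}
where $\hat{G}$ denotes the proposed estimator. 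Hence the estimator is consistent and asymptotically unbiased as $N\to\infty$, with an explicit bias $N^{-1}\int_U \mathbb{V}[P_h(u)] \diff u$ that does not depend on the network $\mathcal{N}$. This last observation is the crucial one: since the bias is independent of $\mathcal{N}$, minimizing the empirical loss, even for single paths $N=1$, still targets the conditional expectation $\mathbb{E}[P_h]$, which is precisely what connects this estimator to the generalization error and justifies the vanishing bias claimed in \cref{unbiasedness} for \eqref{error-disk-payoff}.
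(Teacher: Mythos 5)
Your proposal is correct. The paper itself offers no proof of \cref{mc1}: it is stated as an immediate consequence of applying the standard Monte Carlo quadrature to the nested integral $\int_{[0,1]^d}\bigl(\int_{[0,1]^H}P(u^{(2)},u^{(1)})\diff u^{(2)}-\mathcal{N}(u^{(1)})\bigr)^2\diff u^{(1)}$ obtained from the preceding transformation lemma, which is exactly the routine substitution you describe. What you add beyond the paper is the explicit bias computation
\begin{align*}
\mathbb{E}[\hat{G}] = \left\lVert \mathbb{E}[{P}_h(u)] - {\mathcal{N}}(u) \right\rVert_{L^2}^2 + \frac{1}{N}\int_{[0,1]^d} \mathbb{V}\left[{P}_h(u)\right]\diff u,
\end{align*}
which the paper only establishes in the special case $N=1$, namely in the subsequent lemma where the gap between the right-hand sides of \eqref{norm1gl} and \eqref{norm2gl} is identified as $\lVert \mathbb{V}[{P}_h(u)]\rVert_{L^1}$ (note that $\mathbb{V}[P_h(u)-\mathcal{N}(u)]=\mathbb{V}[P_h(u)]$ since $\mathcal{N}$ does not depend on $u^{(2)}$). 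Your observation that this bias is independent of the network, so that minimizing the empirical loss still targets $\mathbb{E}[P_h]$ for any $N\ge 1$, is precisely the content the paper later extracts from proposition 2.2 of \cite{beck2019full} via \eqref{jen1}--\eqref{jen2} to justify \eqref{a3}; your conditioning argument reaches the same conclusion more elementarily and uniformly in $N$. The only caveat is one of interpretation: the corollary's phrase ``can be estimated by'' should not be read as claiming unbiasedness of $\hat{G}$ for the $L^2$ error itself at finite $N$, and your formula makes that distinction explicit.
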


For, e.g., $d=H=1$, this estimator needs $M+MN$ random samples for the training data for each training step. The introduced Monte Carlo estimator relies on the choice of both $M$ and $N$. The attempt to avoid this choice leads to the following considerations.
\begin{lemma}
For the right-hand side of \eqref{norm1gl} the following inequality holds:

\begin{align}
\left\lVert \mathbb{E}[{P}_h(u)] - {\mathcal{N}}(u) \right\rVert_{L^2}^2
\le  \int_{[0,1]^{d+H}}  \left( {P}_h(u) - \mathcal{N}(u)\right)^2 \diff u\label{norm2gl}
\end{align}
and the difference is given by
\begin{align}
\left\lVert \mathbb{V}\left[{P}_h(u) \right] \right\rVert_{L^1}\label{diff2}.
\end{align}

\end{lemma}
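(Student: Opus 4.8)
The plan is to recognise this statement as the standard bias--variance (total variance) decomposition, applied pointwise in the parameter variable and then integrated. First I would split the variable $u=(u^{(1)},u^{(2)})$ into the parameter component $u^{(1)}\in[0,1]^d$ and the randomness component $u^{(2)}\in[0,1]^H$, using that the network $\mathcal{N}$ depends only on $u^{(1)}$, whereas $P_h$ depends on both and the expectation $\mathbb{E}[\,\cdot\,]$ integrates out $u^{(2)}$. By Fubini, the right-hand side of \eqref{norm2gl} can then be written as $\int_{[0,1]^d}\big(\int_{[0,1]^H}(P_h(u^{(1)},u^{(2)})-\mathcal{N}(u^{(1)}))^2\,\diff u^{(2)}\big)\,\diff u^{(1)}$, which isolates the inner randomness integral.

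For fixed $u^{(1)}$, I would insert $\pm\,\mathbb{E}[P_h(u^{(1)})]$ inside the square and expand. The cross term vanishes because $\mathcal{N}(u^{(1)})-\mathbb{E}[P_h(u^{(1)})]$ is constant in $u^{(2)}$, while $\int_{[0,1]^H}(P_h(u^{(1)},u^{(2)})-\mathbb{E}[P_h(u^{(1)})])\,\diff u^{(2)}=0$ by the very definition of the expectation. This produces the pointwise identity $\int_{[0,1]^H}(P_h-\mathcal{N})^2\,\diff u^{(2)}=\mathbb{V}[P_h(u^{(1)})]+(\mathbb{E}[P_h(u^{(1)})]-\mathcal{N}(u^{(1)}))^2$, i.e. the elementary relation $\mathbb{E}[(X-c)^2]=\mathbb{V}[X]+(\mathbb{E}[X]-c)^2$ with $X=P_h(u^{(1)},\cdot)$ and $c=\mathcal{N}(u^{(1)})$.

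Integrating this identity over $u^{(1)}\in[0,1]^d$ and identifying the two resulting terms, the second is exactly $\lVert\mathbb{E}[P_h(u)]-\mathcal{N}(u)\rVert_{L^2}^2$ and the first, being the integral of a nonnegative function, is $\lVert\mathbb{V}[P_h(u)]\rVert_{L^1}$. This yields the exact decomposition $\int_{[0,1]^{d+H}}(P_h-\mathcal{N})^2\,\diff u=\lVert\mathbb{E}[P_h]-\mathcal{N}\rVert_{L^2}^2+\lVert\mathbb{V}[P_h]\rVert_{L^1}$. Since the variance term is nonnegative, dropping it gives the claimed inequality \eqref{norm2gl}, and the stated difference \eqref{diff2} is precisely the variance term that was dropped.

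There is essentially no analytic obstacle; the argument is a direct computation. The only point demanding care is notational: keeping straight that $\mathcal{N}$ is constant in the randomness variable $u^{(2)}$ (so that it plays the role of the constant $c$ above) and that the $L^2$-norm on the left is taken only over the $d$-dimensional parameter cube, while the integral on the right runs over the full $(d+H)$-dimensional cube. Making the Fubini splitting explicit at the very start removes any such ambiguity.
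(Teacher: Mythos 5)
Your proof is correct and follows essentially the same route as the paper: a Fubini splitting of $u=(u^{(1)},u^{(2)})$ followed by the elementary identity $\mathbb{E}[(X-c)^2]=\mathbb{V}[X]+(\mathbb{E}[X]-c)^2$, which the paper phrases as Jensen's inequality plus a separate computation of the gap $\int_{[0,1]^d}\mathbb{V}[P_h(u)-\mathcal{N}(u)]\diff u$ (equal to your term since $\mathcal{N}$ is constant in $u^{(2)}$). Your version, which exhibits the exact decomposition first and then drops the nonnegative variance term, is if anything slightly more transparent.
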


\begin{proof}
Let $u^{(1)}\in [0,1]^d$ and $u^{(2)}\in [0,1]^H$. Then,
\begin{align*}
&\left\lVert \mathbb{E}[{P}_h(u)] - {\mathcal{N}}(u) \right\rVert_{L^2}^2\\
=&  \int_{[0,1]^d} \left( \int_{[0,1]^H}{P}^h(u^{(2)},u^{(1)})\diff u^{(2)} - {\mathcal{N}}(u^{(1)}) \right)^2\diff u^{(1)}\\
%=&  \int_{[0,1]^d} \left( \int_{[0,1]^H}\widehat{P}^h(u_2,u_1)\diff u_2 - \int_{[0,1]^H}\widehat{\mathcal{N}}_{\nu,\theta,\mathbb{E}^*[\widehat{P}]}(u_1)\diff u_2 \right)^2\diff u_1\\
&\le  \int_{[0,1]^d}  \int_{[0,1]^H}\left( {P}^h(u^{(2)},u^{(1)}) - {\mathcal{N}}(u^{(1)})\right)^2\diff u^{(2)} \diff u^{(1)}
\end{align*}
The difference follows from above with 
\begin{align*}
%&\mathbb{L}^2_{2^*}- \mathbb{L}^2_{2}\\
&=  \int_{[0,1]^d}  \int_{[0,1]^H}\left( {P}^h(u^{(2)},u^{(1)}) - {\mathcal{N}}(u^{(1)})\right)^2\diff u^{(2)} \diff u^{(1)}\\
&- \int_{[0,1]^d} \left( \int_{[0,1]^H}{P}^h(u^{(2)},u^{(1)})\diff u^{(2)}  - \int_{[0,1]^H}{\mathcal{N}}(u^{(1)})\diff u^{(2)} \right)^2\diff u^{(1)}\\
&= \int_{[0,1]^d} \mathbb{V}\left[{P}^h(u) - \mathcal{N}(u)\right] \diff u.
\end{align*}
\end{proof}

The Monte Carlo approximation of the right-hand side of \eqref{norm2gl} leads to the following corollary.

\begin{corollary}\label{mc2}
The loss function $\mathbb{L}^2_{2^*}$ defined on the right-hand side of \eqref{norm2gl} can be estimated by the Monte Carlo estimator
\begin{align*}
\widehat{L}_{M}=\frac{1}{M}\sum\limits_{i=1}^M \left( [\widehat{P}(u_{2,i},u_{1,i})] - \mathcal{N}_{\nu,\theta,\widehat{P}}(u_{1,i}) \right)^2,
\end{align*}
with $u_{1,i} \in [0,1]^{d}$ and $u_{2,i} \in [0,1]^{H}$, for $i=1,\dots,M$ independent uniformly distributed.
\end{corollary}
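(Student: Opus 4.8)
The plan is to recognize Corollary~\ref{mc2} as nothing more than the statement that the deterministic integral on the right-hand side of \eqref{norm2gl} is the expectation of a single random per-sample term, which the empirical average $\widehat{L}_M$ then estimates in the standard Monte Carlo sense. Concretely, I would first split $u=(u^{(1)},u^{(2)})$ with $u^{(1)}\in[0,1]^d$ and $u^{(2)}\in[0,1]^H$, matching the argument conventions used in the lemma that produced \eqref{norm2gl}, namely that the network reads only the $d$-dimensional component while the payoff reads both. Since the restriction of Lebesgue measure to $[0,1]^{d+H}$ is exactly the law of a uniform random vector on the cube, the integral rewrites as
\begin{align*}
\int_{[0,1]^{d+H}}\left(P_h(u)-\mathcal{N}(u)\right)^2\diff u
=\mathbb{E}\left[\left(\widehat{P}(u^{(2)},u^{(1)})-\mathcal{N}_{\nu,\theta,\widehat{P}}(u^{(1)})\right)^2\right],
\end{align*}
where $(u^{(1)},u^{(2)})$ is uniformly distributed on $[0,1]^{d+H}$. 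This identity is what turns the loss $\mathbb{L}^2_{2^*}$ into an expected value of the summand appearing in $\widehat{L}_M$.

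Next I would verify unbiasedness, which is the precise sense in which $\widehat{L}_M$ \emph{estimates} $\mathbb{L}^2_{2^*}$. Because the draws $(u_{1,i},u_{2,i})$, $i=1,\dots,M$, are i.i.d.\ uniform on $[0,1]^{d+H}$, each summand has the same distribution as the random variable above, so linearity of expectation gives $\mathbb{E}[\widehat{L}_M]$ equal to the integral. For consistency I would invoke the strong law of large numbers: assuming $P_h$ and $\mathcal{N}$ are square-integrable over the cube, so that the integrand is integrable, the i.i.d.\ average $\widehat{L}_M$ converges almost surely to the integral as $M\to\infty$. If a convergence rate is wanted, one additionally notes $\mathbb{V}[\widehat{L}_M]=M^{-1}\mathbb{V}[(\widehat{P}-\mathcal{N})^2]=\mathcal{O}(M^{-1})$, recovering the $M^{-1/2}$ root-mean-square order that underlies assumptions \eqref{a5i}--\eqref{a5ii}.

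I do not expect a substantive obstacle here, since this is the textbook Monte Carlo estimator applied to the integral \eqref{norm2gl}. The only points requiring care are bookkeeping the splitting $u=(u^{(1)},u^{(2)})$ so that the network argument $u^{(1)}$ and the payoff arguments $(u^{(2)},u^{(1)})$ are drawn from the same sample index $i$, and ensuring integrability of $(\widehat{P}-\mathcal{N})^2$ so that the law of large numbers and the variance identity are legitimate. Both are mild regularity conditions already implicit throughout the preceding development.
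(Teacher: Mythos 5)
Your argument is exactly the one the paper intends: Corollary~\ref{mc2} is stated without proof as the immediate Monte Carlo approximation of the integral in \eqref{norm2gl}, and your identification of that integral as the expectation of the per-sample squared residual under the uniform law on $[0,1]^{d+H}$, followed by unbiasedness and the law of large numbers, supplies precisely the standard justification the authors leave implicit. No gap; your care with the splitting $u=(u^{(1)},u^{(2)})$ and the shared sample index matches the paper's conventions.
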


For, e.g., $d=H=1$, this leads to $2M$ needed random numbers per iteration step. 
%, I.e., if we know the expectation (or if the variance of the estimator for every space-time point is zero), the difference will vanish, and both Monte Carlo estimators (\cref{mc1} and \cref{mc2}) will be equal.

Furthermore, from, e.g., proposition 2.2 of \cite{beck2019full}, we know that under certain assumptions, there exists a a neural network $N:[0,1]^d\rightarrow \mathbb{R}$ and a unique continuous function $f$ such that
\begin{align}
\inf\limits_{f  \in \mathcal{C}([0,1]^d,\mathbb{R})} \int_{[0,1]^{d+H}}  \left( {P}^h(u) - f(u)\right)^2 \diff u= \int_{[0,1]^{d+H}}  \left( {P}^h(u) - N(u)\right)^2 \diff u \label{jen1}
\end{align}
and it holds for every $u\in [0,1]^d$ that
\begin{align}
N(u)=\int\limits_{[0,1]^H} {P}^h(u) \diff u = \mathbb{E}\left[{P}^h(u)\right]. \label{jen2}
\end{align}
\begin{remark}
In other words, the function minimizing the right-hand side of \eqref{norm2gl} could as well minimize the right-hand side of \eqref{norm1gl}. I.e., for the training process of a neural network, both Monte Carlo estimators of \cref{mc2} and \cref{mc2} lead to unbiased results. With respect to the error assumptions, this justifies \eqref{a3}. However, one should keep in mind that the Monte Carlo estimator of \cref{mc1} could be more efficient, even though, depending on the choice of $N$ and $M$. The apparent reason for this is that even though the norm choice does not apply a bias, e.g., in \eqref{a3}, its respective Monte Carlo estimators' quality will affect the loss functions' error, e.g., in \eqref{a5i}.
\end{remark}

For further studies on the generalization error, see, e.g., \cite{berner2020analysis,poggio2002mathematical,gyorfi2006distribution,shalev2014understanding,geer2000empirical}.

We will study a single-level training process for a European option using a geometric Brownian motion in the second subsection. While neglecting the training step-size, we will focus on the numerical properties of the loss functions' error.

%\subsubsection{multilevel}\label{numerics}

\subsection{Numerical results: batch-size convergence and variance reduction}

Even though we know that the standard Monte Carlo simulation converges with order $1/2$, it is not entirely clear whether this property applies to the neural network training, as assumed, e.g., in \eqref{a5i}. Hence, we present some results for increasing batch-sizes for the GBM and a European call option for a small fixed training set, e.g., $s_0\in [100,104]$. Using this small training set, the closed solution as training data, and a vast amount of training steps, we suppose the batch-size to be the crucial factor for the error. A first result can be found in \cref{result}. If not further specified, we use the same network and training parameters as in the examples above.

\begin{table}
 \small
\centering
  \begin{tabular}{c|c|c|c}
  \hline
batch-size  & mean error &  standard deviation  & mean reduction  \\
 \hline
 $1k$ 	  &	$0.0263$  &  $0.0102 $ &	\\
 $4k$   & $0.0139  $ & $ 0.0042 $ & $ 0.5285 $ \\
 $16k$   & $0.0071 $ & $0.0022 $ & $0.5096 $\\
 $64k$   & $ 0.00321 $ & $0.0010 $ & $ 0.4500$\\
 \hline
 \end{tabular}
 \caption{Mean and standard deviation of the $L^\infty$-error with respect to increasing batch-size. Training set is $[100,104]$ and the training is repeated $10$ times. The last column describes the error reduction.}
  \label{result}
\end{table}

We observe a convergence order of approximately $1/2$ for an increasing batch-size with respect to the mean of the sampled $L^\infty$ error.

As well known, our specific choice of the payoff leads to a particular standard deviation of the standard Monte Carlo estimator. Furthermore, it is well known that this standard deviation is responsible for the Monte Carlo error. It could be reduced by, e.g., variance reduction techniques, see, e.g., \cite{glasserman}. For our example, we will use some importance sampling by only sampling paths that stay above the strike price. Since we know that the variance is increasing monotonously for this specific example, we evaluated the variance reduction factor to be included in the interval $[0.34179,0.39067]$ for our training set's initial values. Now, by training a neural network with training data computed with the variance reduced Monte Carlo estimator, we obtain results as can be seen in \cref{result-standard-vs-oss}.

\begin{table}[H]
 \small
\centering
  \begin{tabular}{c|c|c|c}
  \hline
batch-size  & mean error standard net & mean error OSS net & mean reduction \\
 \hline
 $1k$   &	$ 0.0263 $  &  $ 0.0106  $ & $0.3869 $  	\\
% $4k$  & $  0.0139 $ & $   0.0058   $ & $ 0.4173 $ \\
%$16k$   & $0.0070$ & $ 0.0030 $ & $ 0.4286  $\\
% $64k$  & $ 0.0031 $ & $ 0.0022  $ & max genauigkeit\\
 \hline
 \end{tabular}
 \caption{Mean of the $L^\infty$-error for both the neural networks. The calculated mean reduction is given in the last column. The training was computed on the interval $[100,104]$ and repeated $10$ times.}
  \label{result-standard-vs-oss}
\end{table}

Comparing the results' mean $L^\infty$-error, we see that the $L^\infty$-error reduction factor is included in the above interval of variance reduction ratios. This property may justify a relation between the needed batch-size and the estimators' variance as, e.g., used in \eqref{a5i}.

\subsection{Python code example}
Code of the \textit{level estimator} $P_1-P_0$.

\begin{lstlisting}[language=Python, caption=Training data for level estimator,label={algorithm2}]
def sde_body_p1_p0(idx, s_coarse, s_fine, samples):
    z1 = tf.random_normal(shape=(samples, batch_size_p1_p0, d),
                          stddev=1., dtype=dtype)
    z2 = tf.random_normal(shape=(samples, batch_size_p1_p0, d),
                          stddev=1., dtype=dtype)
    z=(z1+z2)/np.sqrt(2.)
    h_fine=1
    h_coarse=1/2
    s_fine=s_fine + mu *s_fine * h_fine +sigma * s_fine *np.sqrt(h_fine)*z1 + 0.5 *sigma *s_fine *sigma * ((np.sqrt(h_fine)*z1)**2-h_fine)
    s_fine=s_fine + mu *s_fine * h_fine +sigma * s_fine *np.sqrt(h_fine)*z2 + 0.5 *sigma *s_fine *sigma * ((np.sqrt(h_fine)*z2)**2-h_fine)
    s_coarse=s_coarse + mu *s_coarse * h_coarse +sigma * s_coarse *np.sqrt(h_coarse)*z + 0.5 *sigma *s_coarse *sigma * ((np.sqrt(h_coarse)*z)**2-h_coarse)    
    return tf.add(idx, 1), s_coarse, s_fine   
\end{lstlisting}

\section{Proof}\label{proof}

This subsection provides the missing proof.

\begin{proof}[Proof of \cref{corollar2.2}]

We derive an upper bound of $\frac{1}{2} \epsilon^2$ on the square of the expectation of the bias and an upper bound of $\frac{1}{2} \epsilon^2$ on the variance, which together with \eqref{single-mse2} give an $\epsilon^2$ upper bound on \eqref{single-mse}.

Setting $K=32c_3^2\epsilon^{-2}$ and $M=\sqrt{32}c_2\epsilon^{-1}$ together with $\eqref{error-disk-sde} \le \frac{1}{\sqrt{32}}\epsilon, \eqref{error-bestnet} \le \frac{1}{\sqrt{32}}\epsilon$ and $\eqref{error-disk-payoff} =0$ leads to
\begin{align*}
\left(\mathbb{E}\left[\left\lVert \mathbb{E}[P(y)]-\mathcal{N}_{\nu,\theta,\hat{P},\hat{\mathbb{L}}}(y)\right\rVert_{L^1}\right]\right)^2 \le \left( \frac{1}{\sqrt{32}}\epsilon +\frac{1}{\sqrt{32}}\epsilon + 0 + \frac{1}{\sqrt{32}}\epsilon+ \frac{1}{\sqrt{32}}\epsilon \right)^2 = \frac{1}{2}\epsilon^2,
\end{align*}
Furthermore, the above choices for $K$ and $M$ together with the assumptions $\mathbb{V}[\eqref{error-disk-loss}] \le c_2^2 M^{-2}$, and $ \mathbb{V}[\eqref{error-opti}] \le c_3^2 K^{-1}$ lead to
\begin{align*}
\mathbb{V}\left[\left\lVert \mathbb{E}[P(y)]-\mathcal{N}_{\nu,\theta,\hat{P},\hat{\mathbb{L}}}(y)\right\rVert_{L^1}\right]\le \frac{1}{32}\epsilon^2 +\frac{1}{32}\epsilon^2 \le \frac{1}{2} \epsilon^2. 
\end{align*}
The assumption on the complexity is bounded together with the choices for $K$ and $M$ lead to
$C_{\mathcal{N}}\le c_4 32c_3^2 \sqrt{32}c_2\epsilon^{-5}$. Hence, setting $c_5= c_4 32c_3^2 \sqrt{32}c_2$ completes the proof.
\end{proof}

\begin{proof}[Proof of \cref{corollar2.3}]

The proof is analogue to the proof of \cref{corollar2.2}, but we choose $K=32c_3^2\epsilon^{-2}$, $M=32c_2^2\epsilon^{-2}$ and 
\begin{align*}
h=\frac{\epsilon }{c_1 \sqrt{32}},
\end{align*}
which again leads to the $\epsilon^2$ bound on \eqref{single-mse}.
The computational complexity is bounded by
\begin{align*}
C_{\mathcal{N}}\le c_4 K M  h^{-1} \le c_4 32^2c_3^2 c_2^2c_1 \sqrt{32}\epsilon^{-5}.
\end{align*}
Hence, setting $c_5=c_4 32^2c_3^2 c_2^2c_1 \sqrt{32}$ completes the proof.
\end{proof}

To shorten the proof of \cref{complexity-thm} and simplify its presentation, we will use generalized $K_l$ and $M_l$ from the second case onwards, which we apply for all the remaining cases. Nevertheless, this leads to bad estimates for the $\log \epsilon$ terms. Hence, if interested in a particular case, we recommend a more in-depth examination of the first case's proof.

\begin{proof} [Proof of \cref{complexity-thm}]

For each case, we choose specific $K_l$ and $M_l$ such that these bound \eqref{net-mse2}. We use the decomposition \eqref{decomposition} and show that both satisfy an $\frac{1}{2} \epsilon^2$ bound. 
Finally, using the assumptions \eqref{complexity-bound}, we show that the chosen parameters can bound the overall complexity
 \begin{align}
C_{\mathcal{N}} \le \sum_{l=0}^L c_4 h_l^{-1} M_l K_l, \label{complexity-bound2}
\end{align}
with the stated complexity for the specific case.
Let $h_l=H^{-l}T$, for $l=0,\dots,L$ be different step-widths with $H>1$. We start by choosing $L$ to be 
\begin{align*}
L=\left\lceil \frac{\log\left( \sqrt{32}c_1T^\alpha \epsilon^{-1}\right)}{\alpha \log H} \right\rceil,
\end{align*}
so that
\begin{align}
\frac{1}{\sqrt{32}}H^{-\alpha}\epsilon < c_1h_L^\alpha \le \frac{1}{\sqrt{32} }\epsilon.\label{e0ungl}
\end{align}
Let $\theta$ be positive, then
\begin{align*}
\sum_{l=0}^L h_l^\theta&=\sum_{l=0}^L \left(H^{-l}T \right)^\theta
=T^{\theta}\sum_{l=0}^L \left(H^{-\theta} \right)^{l}< \frac{T^{\theta}}{1-H^{-\theta}}.
\end{align*}
On the contrary, for negative exponents we have
\begin{align*}
\sum\limits_{l=0}^L h_l^{-\theta}=h_L^{-\theta} \sum\limits_{l=0}^L (H^\theta)^{-l} & < \frac{H^\theta}{H^\theta-1} h_L^{-\theta},
\end{align*}
which holds due to \eqref{e0ungl} and with 
\begin{align*}
h_L^{-\theta}& < H^\theta \left( \frac{\epsilon}{\sqrt{32}c_1}\right)^{-\theta/\alpha},
\end{align*}
we obtain
\begin{align*}
\sum\limits_{l=0}^L h_l^{-\theta} &< \frac{H^{2 \theta }}{H^\theta-1}\left( \sqrt{32}c_1 \right)^{\theta/\alpha} \epsilon^{-\theta/\alpha}.
\end{align*}
For a simplified presentation, we denote
\begin{align}
g: \theta \mapsto 
\begin{cases}
 \frac{T^{\theta}}{1-H^{-\theta}} &\text{ for } \theta >0, \\
%  L+1 &\text{ for } \theta=0,\\
  \frac{H^{2 \theta }}{H^\theta-1}\left( \sqrt{32}c_1 \right)^{\theta/\alpha},& \text{ for } \theta<0. \\
\end{cases} \label{geometric}
\end{align}
I.e., we will have the following inequalities
\begin{align}
\sum_{l=0}^L h_l^\theta \le
\begin{cases}
 g(\theta) &\text{ for } \theta >0, \\
  L+1 &\text{ for } \theta=0,\\
  g(\theta) \epsilon^{-\theta/\alpha},& \text{ for } \theta<0. \\
\end{cases} \label{geometric2}
\end{align}
%\begin{align}
%g: \theta \mapsto \frac{T^{\theta}}{1-H^{-\theta}}.\label{geometric}
%\end{align}
%Again, for a further simplified presentation, we denote
%\begin{align}
%h: \theta \mapsto \frac{H^{2 \theta }}{H^\theta-1}\left( \sqrt{32}c_1 \right)^{\theta/\alpha}.\label{hilf2}
%\end{align}
Now, let us consider the different parameter values.\\

(a) If $\eta=0.5, \gamma=2, \beta=2$ and $\alpha=1$ we set
\begin{align}
K_l = \left\lceil  32 \epsilon^{-2} c_2^2 h_l^{1.5}  g(1.25)^2
\right\rceil \label{kl1}
\end{align}
and
\begin{align}
M_l = \left\lceil  32 \epsilon^{-1} c_3^2c_6^2 h_l^{3/4}  
\right\rceil,\label{ml1}
\end{align}
with 
\begin{align*}
\begin{split}
c_6=\max \left\{g(2/8)(32  c_2^2  g(1.25)^2 )^{-1/4},\right.
\left. T(1-H^{-1/2})^{-2}\left(\sqrt{32  c_2^2   g(1.25)^2}\right)^{-2} \right\},
\end{split}
\end{align*}
Using \eqref{a4i}, \eqref{geometric} and \eqref{kl1}, we obtain
\begin{align*}
\mathbb{E}\left[\sum_{l=0}^L  \left\lVert \mathcal{N}_{\nu,\Theta^l,\hat{Y}_l,{\mathbb{L}}}(y)-\mathcal{N}_{\nu,\theta^l_{K_l},\hat{Y}_l,{\mathbb{L}}}(y))\right\rVert_{L^1}\right]&\le \sum_{l=0}^L  c_2h_l^2 K_l^{-1/2}\\
%&\le c_2 \sum h_l^2 K_l^{-1/2}\notag
%&\le c_2  \frac{1}{\sqrt{32c_2^2 (g(1.25))^2}}\epsilon\sum h_l^2 \frac{1}{h_l^{0.75}}\notag\\
&\le c_2  \frac{1}{\sqrt{32c_2^2 g(1.25)^2}}\epsilon\sum_{l=0}^L  h_l^{1.25}\notag\\
& \le  \frac{1}{\sqrt{32}} \epsilon .
\end{align*}
Using \eqref{a5i}, \eqref{geometric} and \eqref{ml1}, we obtain 
\begin{align*}
\mathbb{E}\left[\sum_{l=0}^L\left\lVert \mathcal{N}_{\nu,\theta^l_{K_l},\hat{Y}_l,{\mathbb{L}}}(y)-\mathcal{N}_{\nu,\theta^l_{K_l},\hat{Y}_l,{\mathbb{L}}_{M_l}}(y)\right\rVert_{L^1}\right]&\le \sum_{l=0}^L c_3 h_l \rho_l^{1/2} M_l^{-1/2}\\
&\le \sum_{l=0}^L c_3 h_l K_l^{-1/4} \frac{1}{\sqrt{32 \epsilon^{-1} c_3^2c_6^2 h_l^{3/4}}}\\
%&\le \sum_{l=0}^L c_3 h_l \frac{1}{\left(32 \epsilon^{-2} c_2^2 c_6^2 h_l^{1.5}  g(1.25)^2 \right)^{1/4}} \frac{1}{(32 \epsilon^{-1} c_3^2 c_6^2 h_l^{3/4})^{1/2}}\\
%&\le \epsilon^{1} \frac{1}{\sqrt{32c_6^2}(32  c_2^2  g(1.25)^2 )^{1/4}} \sum_{l=0}^L  h_l \frac{1}{  h_l^{3/8}   } \frac{1}{  h_l^{3/8}}\\
&\le\frac{1}{\sqrt{32c_6^2}} \epsilon \frac{1}{(32  c_2^2   (g(1.25))^2 )^{1/4}} \sum_{l=0}^L  h_l^{2/8}\\
&\le\frac{1}{\sqrt{32c_6^2}} \epsilon \frac{g(2/8)}{c_6(32  c_2^2  (g(1.25))^2 )^{1/4}} \\
&\le \frac{1}{\sqrt{32}} \epsilon .
\end{align*}
Hence, using these results together with assumptions \eqref{a1}-\eqref{a3} leads to
\begin{align}
\left(\mathbb{E}\left[\left\lVert \mathbb{E}[P(y)]-\hat{\mathcal{N}}(y)\right\rVert_{L^1}\right]\right)^2\le \left( \frac{1}{\sqrt{32} }\epsilon + \frac{1}{\sqrt{32} }\epsilon +\frac{1}{\sqrt{32} }\epsilon + \frac{1}{\sqrt{32} }\epsilon\right)^2 = \frac{1}{2} \epsilon^2.
\end{align}
I.e., we obtain the searched $\frac{1}{2}\epsilon^2$ error bound on the square of the bias. Now, using \eqref{a4ii}, \eqref{kl1} and \eqref{geometric}, we obtain
\begin{align*}
\mathbb{V}\left[\sum_{l=0}^L \left\lVert \mathcal{N}_{\nu,\Theta^l,\hat{Y}_l,{\mathbb{L}}}(y)-\mathcal{N}_{\nu,\theta^l_{K_l},\hat{Y}_l,{\mathbb{L}}}(y))\right\rVert_{L^1}\right]& \le \sum_{l=0}^L  c_2^2 h_l^{4} K_l^{-1}\\
& \le \sum_{l=0}^L  c_2^2 h_l^{4}\frac{1}{32 \epsilon^{-2} c_2^2 h_l^{1.5}  (g(1.25))^2}\\
& = \frac{1}{32} \epsilon^2 \sum_{l=0}^L   h_l^{4}\frac{1}{   h_l^{1.5}  (g(1.25))^2}\\
%& \le \frac{1}{32} \epsilon^2 \sum_{l=0}^L   h_l^{2.5}\frac{1}{  (g(1.25))^2}\\
& \le \frac{1}{32} \epsilon^2   \frac{ g(2.5)}{ g(1.25)^2}\\
%& \le \frac{1}{32} \epsilon^2   \frac{(1-H^{-1.25})^{2} }{ (1-H^{-2.5}) }\\
& \le \frac{1}{4} \epsilon^2,
\end{align*}
which holds, since we have $g(2.5) < g(1.25)^2$.
Using \eqref{a5ii}, \eqref{ml1} and \eqref{geometric}, we obtain
\begin{align*}
\mathbb{V}\left[\sum_{l=0}^L\left\lVert \mathcal{N}_{\nu,\theta^l_{K_l},\hat{Y}_l,{\mathbb{L}}}(y)-\mathcal{N}_{\nu,\theta^l_{K_l},\hat{Y}_l,{\mathbb{L}}_{M_l}}(y)\right\rVert_{L^1}\right]&\le c_3^2 h_l^{2} \rho_l^{1} M_l^{-1}\notag\\
%&\le\sum_{l=0}^L c_3^2 h_l^{2} K_l^{-1/2} M_l^{-1}\notag\\
%&\le\sum_{l=0}^L c_3^2 h_l^{2} \frac{1}{\sqrt{32 \epsilon^{-2} c_2^2 h_l^{1.5}  (g(1.25))^2}} \frac{1}{32 \epsilon^{-1} c_3^2c_6^2 h_l^{3/4} }\notag\\
&\le\epsilon^2 \frac{1}{\sqrt{32  c_2^2   (g(1.25))^2}} \frac{1}{32  c_6^2  }\sum_{l=0}^L  h_l^{1/2}\notag\\
&\le\frac{1}{32  c_6^2  }\epsilon^2 \frac{g(1/2)}{\sqrt{32  c_2^2   (g(1.25))^2}}  \notag\\
& \le  \frac{1}{4} \epsilon^2.
\end{align*}
Hence, we obtain an $\frac{1}{2}\epsilon^2$ upper bound on the variance and the together with the bound on the bias the required $\epsilon^2$ bound on \eqref{net-mse2}. Finally, we study the computational cost for the chosen parameters. Using \eqref{complexity-bound2}, \eqref{geometric}, \eqref{geometric2}, \eqref{kl1}, \eqref{ml1} and the following upper bounds on $K_l$ and $M_l$
\begin{align*}
K_l <  32 \epsilon^{-2} c_2^2 h_l^{1.5}  (g(1.25))^2  +1,
 \hspace{10mm} 32 \epsilon^{-1} c_3^2c_6^2 h_l^{3/4}    +1,
\end{align*}
we obtain 
\begin{align*}
C_{\mathcal{N}} &\le \sum_{l=0}^L c_4 h_l^{-1} M_l K_l\\
&\le \sum_{l=0}^L c_4 h_l^{-1}\left(32 \epsilon^{-2} c_2^2 h_l^{1.5}  (g(1.25))^2  +1\right) \left( 32 \epsilon^{-1} c_3^2c_6^2 h_l^{3/4}    +1\right)\\
&= c_4 \sum_{l=0}^L  h_l^{-1}\left( c_7\epsilon^{-2}  \left( h_l\right)^{1.5}+1\right) \left( c_8 \epsilon^{-1}  h_l^{0.75}+1\right)\\
%&= c_4 \sum_{l=0}^L  h_l^{-1} \left(  c_7c_8 \epsilon^{-3}  h_l^{2.25}  +c_7\epsilon^{-2}  \left( h_l\right)^{1.5} +   c_8 \epsilon^{-1}  h_l^{0.75} +1   \right)\\
%&= c_4c_7c_8\epsilon^{-3} \sum_{l=0}^L  h_l^{1.25} + c_4c_7\epsilon^{-2} \sum_{l=0}^L  h_l^{0.5} + c_4c_8\epsilon^{-1} \sum_{l=0}^L  h_l^{-0.25} + c_4 \sum_{l=0}^L  h_l^{-1}\\
&\le c_4c_7c_8\epsilon^{-3} g(1.25) + c_4c_7\epsilon^{-2} g(0.5) + c_4c_8\epsilon^{-1} g(-0.25) \epsilon^{-0.25/\alpha} + c_4 g(-1) \epsilon^{-1/\alpha},
\end{align*}
with $c_7=32 c_2^2   (g(1.25))^2$ and $c_8=32 c_3^2c_6^2$.
Hence, for this case we obtain the required complexity bound
\begin{align*}
C_{\mathcal{N}}&\le c_5 \epsilon^{-3}, 
\end{align*}
with $c_5=c_4c_7c_8 g(1.25)+c_4c_7 g(0.5)+c_4 c_8 g(-0.25) + c_4 g(-1) $, which completes the proof.
Now, as mentioned above, instead of individual choices for $K_l$ and $M_l$ for the remaining cases, we will choose them in a more generic way, i.e. let
\begin{align}
K_l = \left\lceil  32 \epsilon^{-2} c_2^2 h_l^{2 \gamma\alpha} (L+1)^2 
\right\rceil \label{Kl}
\end{align}
and 
\begin{align}
M_l = \left\lceil  32 \epsilon^{-2 +2\eta} c_3^2 h_l^{\bar{\beta}}  (L+1)^2
\right\rceil. \label{Ml}
\end{align}
We will use \eqref{Kl} and \eqref{Ml} for each of the remaining cases. As we will see in the following, these choices satisfy the required error bound on the MSE for each case. Hence, for each case, we will only have to study the individual computational cost.
Using \eqref{a4i} and \eqref{Kl}, we obtain
\begin{align}
\mathbb{E}\left[\sum_{l=0}^L \left\lVert \mathcal{N}_{\nu,\Theta^l,\hat{Y}_l,{\mathbb{L}}}(y)-\mathcal{N}_{\nu,\theta^l_{K_l},\hat{Y}_l,{\mathbb{L}}}(y))\right\rVert_{L^1}\right]&\le \sum_{l=0}^L c_2 h_l^{\alpha\gamma} K_l^{-1/2}\notag\\
& \le \frac{1}{\sqrt{32}} \epsilon\label{e4ungl} .
%&< \sum c_3^{1/2} (c_4h^2_l)^{\gamma/2} \frac{1}{\left(32 \epsilon^{-2} c_3c_4^\gamma h_L^{-1/4}(1-H^{-1/8})^{-1}h_l^{3/4}\right)^{1/2}}\notag\\
%&< \sum  h_l^{\gamma} \frac{1}{\left(32 \epsilon^{-2} h_L^{-1/4}(1-H^{-1/8})^{-2}h_l^{3/4}\right)^{1/2}}\notag\\
%&<\frac{1}{\sqrt{32}}\epsilon^{1} \sum  h_l^{1/4} \frac{1}{\left(  h_L^{-1/4}(1-H^{-1/8})^{-1}h_l^{3/4}\right)^{1/2}}\notag\\
%&<\frac{1}{\sqrt{32}}\epsilon^{1} \sum  h_l^{1/4} h_L^{1/8}(1-H^{-1/8})^{1} h_l^{-3/8} \notag\\
%&<\frac{1}{\sqrt{32}}\epsilon^{1}h_L^{1/8}(1-H^{-1/8})^{1} \sum   h_l^{-1/8}\\
%&< \frac{1}{\sqrt{32}}\epsilon.
\end{align}
%With geometric series we obtain
%\begin{align*}
%\sum_l^Lh^{-1/8}&=h_L^{-1/8}\sum_l^L(H^{-1/8})^l\\
%&< h_L^{-1/8}(1-H^{-1/8})^{-1}
%\end{align*}
Using \eqref{a5i} and \eqref{Ml}, we obtain
\begin{align}
\mathbb{E}\left[\sum_{l=0}^L\left\lVert \mathcal{N}_{\nu,\theta^l_{K_l},\hat{Y}_l,{\mathbb{L}}}(y)-\mathcal{N}_{\nu,\theta^l_{K_l},\hat{Y}_l,{\mathbb{L}}_{M_l}}(y)\right\rVert_{L^1}\right]&\le \sum_{l=0}^L c_3 h_l^{\beta/2} \rho_l^{\eta} M_l^{-1/2}\notag\\
&\le \sum_{l=0}^L c_3 h_l^{\beta/2} K^{-\eta/2} M_l^{-1/2}\notag\\
%&\le \sum_{l=0}^L c_3 h_l^{\beta/2} \frac{1}{\sqrt{32 \epsilon^{-2} c_2^2 h_l^{2 \gamma\alpha} (L+1)^2} } \frac{1}{\sqrt{32 \epsilon^{-2 +2\eta} c_3^2 h_l^{\bar{\beta}}  (L+1)^2}}\notag\\
%&\le  \frac{1}{\sqrt{32}(L+1)}\sum_{l=0}^L h_l^{\beta/2} \frac{1}{(32 \epsilon^{-2} c_2^2 h_l^{2 \gamma\alpha}(L+1)^2 )^{\eta/2}\sqrt{ \epsilon^{-2 +2\eta}  h_l^{\bar{\beta}}  }}\notag\\
&\le  \frac{1}{\sqrt{32}(L+1)}\epsilon^{\eta+1-\eta}\sum_{l=0}^L h_l^{\beta/2} \frac{1}{  h_l^{ \gamma\alpha \eta}     h_l^{\bar{\beta/2}}  }\notag\\
%&=  \frac{1}{\sqrt{32}(L+1)} \epsilon \sum_{l=0}^L h_l^{\beta/2-\alpha\gamma\eta-\bar{\beta}/2}\notag\\
&\le \frac{1}{\sqrt{32}}\epsilon\label{e5ungl},
%&<  \sum_{l=0}^L c_2^{1/2} c_4^{1/2} h_l \frac{1}{\left(32 \epsilon^{-2} c_2^{1/2}c_4^{1/2} T^{1}\left(1-H^{-1/4}\right)^{-2}h_l^{3/2}\right)^{1/2}}\notag\\
%&<\frac{1}{\sqrt{32 }} \epsilon^{1}T^{-1/2}\left(1-H^{-1/4}\right)^{1} \cdot \sum_{l=0}^L  h_l h_l^{-3/4}\notag\\
%&<\frac{1}{\sqrt{32}}\epsilon^{1} T^{-1/2}\left(1-H^{-1/4}\right)^{1} \cdot \sum_{l=0}^L h_l^{1/4}.\notag
\end{align}
where the last inequality holds by choosing $\bar{\beta}=\beta-2\alpha\gamma\eta$.
%With geometric series we obtain
%\begin{align*}
%\sum_{l=0}^L h_l^{1/4}= T^{1/4} \sum_{l=0}^L (H^{-1/4})^l < T^{1/4} (1-H^ {-1/4})^{-1}
%\end{align*}
For the variance analogue formulas hold, as we will see in the following calculations.
%Setting 
%\begin{align*}
%K_l = \left\lceil  4 \epsilon^{-2} c_2^2 \left( c_1h_l^\alpha\right)^{2 \gamma} (L+1)^2 
%\right\rceil
%\end{align*}
%and with assumption \eqref{a4ii} we obtain
Using \eqref{a4ii} and \eqref{Kl}, we obtain
\begin{align}
\mathbb{V}\left[ \sum_{l=0}^L\left\lVert \mathcal{N}_{\nu,\Theta^l,\hat{Y}_l,{\mathbb{L}}}(y)-\mathcal{N}_{\nu,\theta^l_{K_l},\hat{Y}_l,{\mathbb{L}}}(y))\right\rVert_{L^1}\right]& \le \sum_{l=0}^L  c_2^2 h_l^{2\alpha\gamma} K_l^{-1}\notag\\
&\le \sum_{l=0}^L    \frac{1}{ 32 \epsilon^{-2}  (L+1)^2 }\notag\\
& \le \frac{1}{4} \epsilon^2\label{e4varungl}.
%&\le \sum_{l=0}^L c_3 c_4^{\gamma} h_l^{\gamma \beta} K_l^{-1}\\
%&\le \frac{1}{{4} } \epsilon^2 h_L^{1/4} (1-H^{-1/4})^{1}\sum_{l=0}^L h_l^{1/2} h_l^{-3/4}\\
%&\le \frac{1}{{4} } \epsilon^2 h_L^{1/4} (1-H^{-1/4})^{1} \sum_{l=0}^L h_l^{1/2} h_l^{-3/4}\\
\end{align}
%With geometric series we obtain
%\begin{align*}
%\sum_{l=0}^Lh^{-1/4}&=h_L^{-1/4}\sum_{l=0}^L(H^{-1/4})^l\\
%&< h_L^{-1/4}(1-H^{-1/4})^{-1}
%\end{align*}
%and hence an $\frac{1}{{4}}\epsilon$ upper bound.
Using \eqref{a5ii} and \eqref{Ml}, we obtain
\begin{align}
\mathbb{V}\left[ \sum_{l=0}^L \left\lVert \mathcal{N}_{\nu,\theta^l_{K_l},\hat{Y}_l,{\mathbb{L}}}(y)-\mathcal{N}_{\nu,\theta^l_{K_l},\hat{Y}_l,{\mathbb{L}}_{M_l}}(y)\right\rVert_{L^1}\right]&\le \sum_{l=0}^Lc_3^2 h_l^{\beta} \rho_l^{2\eta} M_l^{-1}\notag\\
%&\le \sum_{l=0}^Lc_3^2 h_l^{\beta} \frac{1}{(32 \epsilon^{-2} c_2^2 h_l^{2 \gamma\alpha} (L+1)^2)^{\eta}} \frac{1}{32 \epsilon^{-2 +2\eta} c_3^2 h_l^{\bar{\beta}}  (L+1)^2}\notag\\
&\le \frac{1}{32(L+1)^2}\epsilon^{2\eta+2-2\eta}\sum_{l=0}^L h_l^{\beta} \frac{1}{  h_l^{2 \gamma\alpha\eta} {}   h_l^{\bar{\beta}}  }\notag\\
& \le \frac{1}{4} \epsilon^2\label{e5varungl},
%&< \frac{1}{{4} } \epsilon^2 T^{-1/2} \left(1-H^{-1/2}\right)^{1} \sum_{l=0}^L  h_l^{1/2}.
\end{align}
again the last inequality holds by choosing $\bar{\beta}=\beta-2\alpha\gamma\eta$.
%and again with the geometric series we obtain
%\begin{align*}
%\sum_{l=0}^L  h_l^{1/2} < T^{1/2} (1-H^{-1/2})^{-1}
%\end{align*}
%leading to a $\frac{1}{4} \epsilon^2$ upper bound.

Hence, we obtain an $\frac{1}{2}\epsilon^2$ upper bound on the variance and with \eqref{e4ungl} and \eqref{e5ungl} an $\frac{1}{2}\epsilon^2$ upper bound on the bias. Together, the required we obain a $\epsilon^2$ bound on the MSE \eqref{net-mse2}.

Now, we will study the computational cost for each case.
We have the following upper bounds
\begin{align*}
K_l<32  \epsilon^{-2} c_2^2 h_l^{2 \gamma\alpha} (L+1)^2   +1
\end{align*}
and
\begin{align*}
M_l< 32 \epsilon^{-2 +2\eta} c_3^2 h_l^{\bar{\beta}}  (L+1)^2 +1.
\end{align*}
Together with \eqref{complexity-bound2} this leads to 
\begin{align}
C_{\mathcal{N}} &\le \sum_{l=0}^L c_4 h_l^{-1} \left( 32 \epsilon^{-2} c_2^2 h_l^{2 \gamma\alpha} (L+1)^2   +1\right) \left(32 \epsilon^{-2 +2\eta} c_3^2 h_l^{\bar{\beta}}  (L+1)^2 +1\right)\notag\\
%&\le c_6\sum_{l=0}^L  h_l^{-1} \left(  c_7 \epsilon^{-2}  h_l^{2 \gamma\alpha} (L+1)^2   +1\right) \left(c_8 \epsilon^{-2 +2\eta} h_l^{\bar{\beta}}  (L+1)^2 +1\right)\notag\\
%&\le c_6 \sum_{l=0}^L  h_l^{-1} \left(  c_7 \epsilon^{-2} h_l^{2 \gamma\alpha} (L+1)^2  c_8 \epsilon^{-2 +2\eta}  h_l^{\bar{\beta}}  (L+1)^2 + c_7 \epsilon^{-2}  h_l^{2 \gamma\alpha} (L+1)^2 + c_8 \epsilon^{-2 +2\eta}  h_l^{\bar{\beta}}  (L+1)^2 +1\right)\notag\\
\begin{split}&\le c_4c_7c_8 (L+1)^4  \epsilon^{-4 +2\eta}\sum_{l=0}^L  h_l^{-1+2 \gamma\alpha+\bar{\beta}}   
  + c_4c_7(L+1)^2\epsilon^{-2}\sum_{l=0}^L h_l^{-1+2 \gamma\alpha}  \\ 
  &   + c_4c_8(L+1)^2\epsilon^{-2 +2\eta}\sum_{l=0}^L h_l^{-1+\bar{\beta}}       + c_4 g(-1)\epsilon^{-1/\alpha}, \end{split}\label{complex} 
% &\le c_8 \epsilon^{-2} + \sum_{l=0}^LL c_6 h_l^{-1} \left(32 \epsilon^{-2} c_2^2 \left( c_1h_l^\alpha\right)^{2 \gamma} (L+1)^2  \right) \left(32 \epsilon^{-2 +2\eta} c_3^2 h_l^{\beta}  (L+1)^2\right)\notag\\
%&\le c_8 \epsilon^{-2} + \epsilon^{-4 +2\eta} \sum_{l=0}^L c_6 h_l^{-1} \left(32  c_2^2 \left( c_1h_l^\alpha\right)^{2 \gamma} (L+1)^2  \right) \left(32  c_3^2 h_l^{\beta}  (L+1)^2\right)\notag\\
%&\le c_8 \epsilon^{-2} + c_9 \epsilon^{-4 +2\eta} \sum_{l=0}^L  h_l^{-1} \left(   \left( + h_l^\alpha\right)^{2 \gamma} (L+1)^2  \right) \left( h_l^{\beta}  (L+1)^2\right)\notag\\
%&\le c_8 \epsilon^{-2} + c_9 \epsilon^{-4 +2\eta}(L+1)^6 \sum_{l=0}^L  h_l^{-1} \left(   \left(  h_l^\alpha\right)^{2 \gamma}   \right) \left( h_l^{\beta}  \right)\notag\\
%&\le c_8 \epsilon^{-2} + c_{10} \epsilon^{-4 +2\eta}(L+1)^4 \sum_{l=0}^L h_l^{-1}      h_l^{2 \gamma \alpha}  h_l^\beta \notag \\
%&\le c_6c_7c_8(L+1)^4 \epsilon^{-4 +2\eta}\\
%& \le c_{9} \epsilon^{-4 +2\eta}(L+1)^4\sum_{l=0}^L h_l^{-1+2\gamma\alpha+\beta}.      
\end{align}
with the constants $c_7=32 c_2^2$ and $c_8=32 c_3^2$.
%Now, before studying the parameter choices we recap the following properties for the terms of \eqref{complex}. Consider the exponent of $h_l$ to be positive. We obtain:
%\begin{align*}
%\sum_{l=0}^L h_l^{-1+2\gamma\alpha+\beta}&=\sum_{l=0}^L \left(H^{-l}T \right)^{-1+2\gamma\alpha+\beta}\\
%&=T^{-1+2\gamma\alpha+\beta}\sum_{l=0}^L \left(H^{-l} \right)^{-1+2\gamma\alpha+\beta}\\
%&=T^{-1+2\gamma\alpha+\beta}\sum_{l=0}^L \left(H^{1-2\gamma\alpha-\beta} \right)^{l}.
%\end{align*}
%Now, using the standard geometric series and for a positive exponent $-1+2\gamma\alpha+\beta>0$ this leads to
%\begin{align}
%\sum_{l=0}^L h_l^{-1+2\gamma\alpha+\beta} < \frac{T^{-1+2\gamma\alpha+\beta}}{1-H^{1-2\gamma\alpha-\beta}}\label{hilf3}.
%\end{align}
For $L$ of \eqref{complex} we have 
\begin{align*}
L \le \frac{\log \epsilon^{-1}}{\alpha \log H} + \frac{\log (\sqrt{2}c_1 T^\alpha}{\alpha \log H}+1
\end{align*}
and since $1< \log\epsilon^{-1}$ for $\epsilon< \exp(-1)$ it follows that 
\begin{align}
L+1 \le c_9\log \epsilon^{-1},\label{hilf4}
\end{align}
where
\begin{align*}
c_9= \frac{1}{\alpha \log H}+ \max \left( 0,\frac{\log (\sqrt{2}c_1 T^\alpha}{\alpha \log H} \right) +2.
\end{align*}
Let us consider the different parameter values. Again, we will use $\bar{\beta}=\beta-2\alpha\gamma\eta$ for each case.\\

(b) Let $\eta=0, \gamma=0, \beta=0$ and $\alpha=1/2$. Then, $\bar{\beta}=0$ and with \eqref{complex} we have
\begin{align*}
C_{\mathcal{N}}&%\le c_{6} \epsilon^{-4 +2\eta}(L+1)^4\sum_{l=0}^L h_l^{-1+2\gamma\alpha+\beta}\\
\le c_{6} \epsilon^{-4}(L+1)^4\sum_{l=0}^L h_l^{-1},
\end{align*}
with $c_6=c_4(c_7c_8+c_7+c_8+1)$.
Using \eqref{geometric}, \eqref{geometric2} and \eqref{hilf4} we obtain
\begin{align*}
C_{\mathcal{N}}\le c_{6} \epsilon^{-6} | \log \epsilon |^4,
\end{align*}
with $c_5=c_6c_9^4g(-1)$.
(c) Let $\eta=0, \gamma=0, \beta=0$ and $\alpha=1$. Then, $\bar{\beta}=0$ and with \eqref{complex} we have
\begin{align*}
C_{\mathcal{N}}&%\le c_{5} \epsilon^{-4 +2\eta}(L+1)^4\sum_{l=0}^L h_l^{-1+2\gamma\alpha+\beta}\\
\le c_{5} \epsilon^{-4}(L+1)^4\sum_{l=0}^L h_l^{-1},
\end{align*}
with $c_6=c_4(c_7c_8+c_7+c_8+1)$.
Using \eqref{geometric}, \eqref{geometric2} and \eqref{hilf4} we obtain
\begin{align*}
C_{\mathcal{N}}\le c_{6} \epsilon^{-5} | \log \epsilon |^4,
\end{align*}
with $c_5=c_6c_9^4g(-1)$.

(d) Let $\eta=0, \gamma=0, \beta=1$ and $\alpha=1$. Then, $\bar{\beta}=1$ and with \eqref{complex} we have
\begin{align*}
C_{\mathcal{N}}&%\le c_{6} \epsilon^{-4 +2\eta}(L+1)^4\sum_{l=0}^L h_l^{-1+2\gamma\alpha+\beta}\\
\le c_{6} \epsilon^{-4}(L+1)^4 \sum_{l=0}^L h_l^{0},
\end{align*}
with $c_6=c_4(c_7c_8+c_7+c_8+1)$.
Using \eqref{geometric}, \eqref{geometric2} and \eqref{hilf4} we obtain
\begin{align*}
C_{\mathcal{N}}\le c_{5} \epsilon^{-4} | \log \epsilon |^5,
\end{align*}
with $c_5=c_6c_9^5$.

(e) Let $\eta=0.25, \gamma=1, \beta=1$ and $\alpha=1$. Then, $\bar{\beta}=0.5$ and with \eqref{complex} we have
\begin{align*}
C_{\mathcal{N}}&%\le c_{6} \epsilon^{-4 +2\eta}(L+1)^4\sum_{l=0}^L h_l^{-1+2\gamma\alpha+\bar{\beta}}\\
\le c_{6} \epsilon^{-3.5}(L+1)^4 \sum_{l=0}^L h_l^{1.5},
\end{align*}
with $c_6=c_4(c_7c_8+c_7+c_8+1)$.
Using \eqref{geometric}, \eqref{geometric2} and \eqref{hilf4} we obtain
\begin{align*}
C_{\mathcal{N}}\le c_{5} \epsilon^{-3.5} | \log \epsilon |^4,
\end{align*}
with $c_5=c_6c_9^4g(1.5)$.
(f) Let $\eta=0.5, \gamma=1, \beta=1$ and $\alpha=1$. Then, $\bar{\beta}=0$ and with \eqref{complex} we have
\begin{align*}
C_{\mathcal{N}}&%\le c_{6} \epsilon^{-4 +2\eta}(L+1)^4\sum_{l=0}^L h_l^{-1+2\gamma\alpha+\bar{\beta}}\\
\le c_{6} \epsilon^{-3}(L+1)^4 \sum_{l=0}^L h_l^{1},
\end{align*}
with $c_6=c_4(c_7c_8+c_7+c_8+1)$.
Using \eqref{geometric}, \eqref{geometric2} and \eqref{hilf4} we obtain
\begin{align*}
C_{\mathcal{N}}\le c_{5} \epsilon^{-3} | \log \epsilon |^4,
\end{align*}
with $c_5=c_6c_9^4g(1)$.

\end{proof}

\vskip 0.2in

\end{document}